\newtheorem{theorem}{Theorem}[section]
\newtheorem{proposition}[theorem]{Proposition}
\newtheorem{lemma}[theorem]{Lemma}
\newtheorem{corollary}[theorem]{Corollary}
\newtheorem{definition}[theorem]{Definition}
\newtheorem{remark}[theorem]{Remark}
\numberwithin{equation}{section}
\newcommand{\mb}{\mathbb}
\begin{document}

\baselineskip=16pt

\title{Stability of projective Poincar\'e and Picard bundles}

\author{I. Biswas}

\address{School of Mathematics, Tata Institute of
Fundamental Research, Homi Bhabha Road, Bombay 400005, India}

\email{indranil@math.tifr.res.in}

\author{L. Brambila-Paz}

\address{CIMAT, Apdo. Postal 402, C.P. 36240. Guanajuato, Gto,
M\'exico}

\email{lebp@cimat.mx}

\author{P. E. Newstead}

\address{Department of Mathematical Sciences, The University of
Liverpool, Peach Street, Liverpool, L69 7ZL, England}

\email{newstead@liverpool.ac.uk}

\subjclass[2000]{14H60, 14J60}

\date{\today}

\thanks{All authors are members of the international research group VBAC. The second author acknowledges the support of CONACYT grant 48263-F}
\begin{abstract}
Let $X$ be an irreducible smooth projective curve of genus $g\ge3$ defined over
the complex numbers and let ${\mathcal M}_\xi$
denote the moduli space of stable vector bundles on $X$ of rank $n$
and determinant $\xi$, where $\xi$ is a fixed line bundle
of degree $d$. If $n$ and
$d$ have a common divisor, there is no universal
vector bundle on $X\times {\mathcal M}_\xi$. We prove that there is a projective bundle on 
$X\times {\mathcal M}_\xi$ with the property that its restriction to
$X\times\{E\}$ is isomorphic to $P(E)$ for all $E\in\mathcal{M}_\xi$ and that
this bundle (called the projective Poincar\'e bundle) is stable with respect
to any polarization; moreover its restriction to $\{x\}\times\mathcal{M}_\xi$
is also stable for any $x\in X$. We prove also stability results for bundles
induced from the projective Poincar\'e bundle by homomorphisms
$\text{PGL}(n)\to H$ for any reductive $H$. We show further that there is a
projective Picard bundle on a certain open subset $\mathcal{M}'$ of
$\mathcal{M}_\xi$ for any $d>n(g-1)$ and that this bundle is also stable. We
obtain new results on the stability of the Picard bundle even when $n$ and $d$
are coprime.\end{abstract}

\maketitle

\section{Introduction}

Let $X$ be an irreducible smooth projective curve over $\mathbb C$
of genus $g\geq 3$. For any integer $n\ge2$ and any
algebraic line bundle $\xi$ of degree $d$ on
$X$, let ${\mathcal M}_\xi$ denote the moduli space of stable
vector bundles $E$ of rank $n$ and degree $d$ on $X$ with $\det
E:=\bigwedge^n E\, \cong\, \xi$.

If $d$ is coprime to $n$, there is a universal
vector bundle $\mathcal{U}$
on $X\times {\mathcal M}_\xi$. The direct image of
$\mathcal{U}$ on ${\mathcal M}_\xi$,
which we will denote by $\mathcal{W}$, is called a Picard bundle;
we need to assume $d\geq 2n(g-1)$ in order to
ensure that the Picard sheaf is locally free.
The stability of $\mathcal{U}$ and  $\mathcal{W}$
was proved in \cite{BBN} and \cite{BBGN} respectively.
Moreover, for any point $x\in X$, semistability of the
restriction of $\mathcal{U}$ to $\{x\}\times
{\mathcal M}_\xi$ was proved in \cite{BBN}, while its stability
was established in \cite{LN}.

In this paper we will consider the situation where
$n$ and $d$ have a common divisor. In this case
there is no universal vector bundle on $X\times {\mathcal
M}_\xi$ (see \cite{Ramanan}; also \cite{Ne}). However, there is a
\textit{projective Poincar\'e bundle}
$$
 \mathcal{PU}\, \longrightarrow\, X\times {\mathcal
M}_\xi
$$
such that, for any point $E \in {\mathcal M}_\xi$, the
restriction of $\mathcal{PU}$ to $X\times\{E\}\subset X\times
{\mathcal M}_\xi$ is isomorphic to the projective bundle $P(E)$
over $X$ that parametrizes all lines in the fibres of the stable
vector bundle $E$ on $X$ (see section \ref{sec2} for more details). For any $x\in X$, we denote by $\mathcal{PU}_x$ 
the restriction of $\mathcal{PU}$ to $\{x\}\times\mathcal{M}_\xi$. Although $\mathcal{U}$ does not 
exist, one can also define an {\em adjoint Poincar\'e bundle} $\text{ad}\,\mathcal{U}$. In fact, 
since there is a bijective correspondence between projective bundles of fibre dimension $n-1$ and 
principal $\text{PGL}(n)$-bundles, one can associate 
with any homomorphism of algebraic groups $\rho: \text{PGL}(n)\to H$ with $H$ reductive an {\em induced 
Poincar\'e principal $H$-bundle} $\mathcal{U}^\rho$. When $H=\text{GL}(m)$, one can regard $\mathcal{U}^\rho$ 
as a vector bundle on $X\times\mathcal{M}_\xi$; the adjoint Poincar\'e bundle is a special case of this construction.

We construct also the projective Picard bundle
$\mathcal{PW}$ on the Zariski-open subset $\mathcal{M}'$ of ${\mathcal M}_\xi$ 
consisting of those $E\in\mathcal{M}_\xi$ for which $H^1(X,E)=0$. The fibre of
$\mathcal{PW}$ over any point $E \in {\mathcal M}'$ is
identified with the projective space $P(H^0(X,E))$. 
The construction requires no restriction on $d$, but $\mathcal{PW}=\emptyset$ for $d\le n(g-1)$.

The moduli space ${\mathcal M}_\xi$ is an irreducible smooth
quasiprojective variety defined over $\mathbb C$ of dimension
$(n^2-1)(g-1)$. It is the smooth locus of the moduli space
$\overline{\mathcal M}_\xi$ of (S-equivalence classes of) semistable vector bundles on $X$
of rank $n$ and determinant $\xi$ (recall that $g\geq3$).
The variety $\overline{\mathcal M}_\xi$ is locally factorial (see \cite{DN}) and
$$
\text{Pic}({\mathcal M}_\xi)\cong\text{Pic}(\overline{\mathcal M}_\xi)\cong{\mathbb Z}.
$$
In particular, $\overline{\mathcal M}_\xi$ has a unique polarization represented by a divisor $\Theta$. 
It follows that there is a unique notion of (slope-)stable vector bundle on ${\mathcal
M}_\xi$. The notion of semistable and
stable vector bundles extends to principal bundles (see
\cite{Ra}, \cite{RR}, \cite{RS}, \cite{AB} for the definitions) and in particular to projective bundles; we shall give a direct definition 
for projective bundles in section  \ref{sec2}.

In section \ref{sec3} we prove the following results on the stability of the projective Poincar\'e bundle.

\noindent{\bf Theorem \ref{thm1}.}
{\em Let $X$ be a smooth projective algebraic curve of genus $g\ge3$, $n\ge2$ an integer and $\xi$ 
a line bundle on $X$ of degree $d$. Let ${\mathcal M}_\xi$ denote the moduli space of stable vector bundles on $X$ of rank $n$ and determinant $\xi$ and let $\mathcal{PU}$ be the projective Poincar\'e bundle on 
$X\times {\mathcal M}_\xi$. Then $\mathcal{PU}_x$ is stable for all $x\in X$.}

\noindent{\bf Theorem \ref{thm2}.}
{\em Under the hypotheses of Theorem \ref{thm1}, $\mathcal{PU}$ is stable with respect to any polarization on $X\times {\mathcal M}_\xi$.}

The proofs involve Hecke transformations (see \cite{NR0}, \cite{NR}) and use the same constructions as in \cite{BBGN}. Since  the concept of stability for projective bundles 
agrees with that for principal $\text{PGL}(n)$-bundles (see Remark \ref{remnew4}), these theorems can be restated in terms of principal bundles.
Using a theorem of \cite{RR} concerning principal bundles and recalling that a homomorphism $G\to H$ with 
$H$ reductive is {\em irreducible} if its image is not contained in any proper parabolic subgroup, we obtain

\noindent{\bf Theorem \ref{thm4}.}
{\em Under the hypotheses of Theorem \ref{thm1}, let ${\rm PGL}(n)\to H$ be a homomorphism with $H$ reductive and let  $\mathcal{U}^\rho$ be the induced Poincar\'e $H$-bundle. Then
\begin{itemize}
\item[(i)] $\mathcal{U}^\rho_x$ is semistable for all $x\in X$;
\item[(ii)] $\mathcal{U}^\rho$ is semistable with respect to any polarization on $X\times\mathcal{M}_\xi$;
\item[(iii)] if $\rho$ is irreducible, then $\mathcal{U}^\rho$ is stable with respect to any polarization on $X\times\mathcal{M}_\xi$.
\end{itemize}}

For the last part of this theorem, we need to show that $P(E)^\rho$ is stable for general 
$E\in\mathcal{M}_\xi$. We offer two proofs of this. The first (see Lemma
\ref{lemnew6}) is an algebraic argument based on the concept of monodromy as
introduced in \cite{BPS}. The second (see Remark \ref{remnew14}) involves an 
argument of Subramanian \cite{S} using unitary representations. 
When $n=2$ and $\rho$ is the adjoint representation, we give a third  proof of 
the theorem (Theorem \ref{thm5}) using the methods of the present paper. 

In section \ref{sec-stable}, we define the Picard bundle $\mathcal{PW}$ and prove

\noindent{\bf Theorem \ref{thm3}.}
{\em Under the hypotheses of Theorem \ref{thm1}, suppose further that
  $d>n(g-1)$. Then the projective Picard bundle $\mathcal{PW}$ on ${\mathcal
    M}'$ is stable.}

When $\gcd(n,d)=1$, we can define a Picard sheaf $\mathcal{W}_\xi$ on
$\mathcal{M}_\xi$ whose restriction to $\mathcal{M}'$ is a vector bundle
$\mathcal{W}'$ such that $P(\mathcal{W}')\cong\mathcal{PW}$. As a corollary of
Theorem \ref{thm3} (Corollary \ref{newcor3}), we obtain the stability of
$\mathcal{W}_\xi$ and $\mathcal{W}'$, thus extending the result of
\cite{BBGN}.

\noindent{\bf Notation.} We shall consistently write $\mathcal E_x$ (respectively $\mathcal{P}_x$) for the restriction of a vector bundle ${\mathcal E}$ (respectively projective or principal bundle $\mathcal P$) on $X\times Z$ to $\{x\}\times Z$. We suppose throughout that $X$ is a smooth irreducible projective algebraic curve of genus $g\ge3$ defined over $\mathbb{C}$.

\section{The projective Poincar\'e bundle}\label{sec2}

We begin with a definition of stability for a projective bundle.

Let $Y$ be a polarized irreducible locally factorial projective variety and let $Z$ be a Zariski-open subset of $Y$ whose complement has codimension $\ge2$ in $Y$. Fix a divisor $D$ on $Y$ defining the polarization. For any vector bundle $E$ on $Z$, we can define $c_1(E)$ as a divisor class on $Y$ and write
$$\deg E=[c_1(E).D^{m-1}](Y),$$
where $m=\dim Y$. Now let $P$ be a projective bundle on $Z$ and let $P'$ be a projective subbundle of the restriction of $P$ to a Zariski-open subset $Z'$ of $Z$ whose complement has codimension $\ge2$. Write  $q$, $q'$ for the projections of $P$, $P'$ to $Z$, $Z'$ respectively. We have an exact sequence of vector bundles 
\begin{equation}\label{enew1}
0\longrightarrow T^{\text{rel}}_{q'} P'
\longrightarrow T^{\text{rel}}_qP|_{Z'}
\longrightarrow N_1 \longrightarrow 0
\end{equation}
on $P'$, where $T^{\text{rel}}_{q'}P'$
and $T^{\text{rel}}_qP_Z$ are the relative tangent bundles and $N_1$ is the normal bundle. 
The direct image $N:=q'_*(N_1)$ is a vector bundle on $Z'$. (The higher direct images are all $0$ by (\ref{enew1}) and the fact that $H^i({\mathbb P},T{\mathbb P})=0$ for $i\ge1$ and any projective space ${\mathbb P}$.)
\begin{definition}\label{def1}\begin{em}
The projective bundle $P$ is {\em stable} ({\em semistable}) if, for every such $P'$, the condition
$$
\deg N> 0\ \ (\deg N\ge0)
$$
holds.\end{em}\end{definition}

\begin{remark}\label{remnew4}\begin{em}
We have adopted this form of the definition because it is the most convenient for our purposes. 
Moreover, if $P$ is the projectivization of a vector bundle
$V$ on $Z$,
then a projective subbundle $P'$ defines a subbundle $V'$
of $V$ over $Z'$. In that case, the bundle $N$ is identified
with $\textit{Hom}(V'\, , V/V'))\, =\,
(V')^*\bigotimes (V/V')$. Using this it follows
immediately that $P$ is stable (semistable) if and only if the
vector bundle $V$ is stable (semistable). Definition  \ref{def1} is also equivalent to the 
standard definition of stability for a principal $\text{PGL}(n)$-bundle (see \cite[Definition 4.7]{RR} or \cite{RS}). 
To see this, note that, if we denote also by $P$ the $\text{PGL}(n)$-bundle corresponding to $P$, the subbundle $P'$ of our definition corresponds to a reduction of structure group 
$\sigma: Z'\to P/Q$, where $Q$ is a maximal parabolic subgroup of $\text{PGL}(n)$. If $T_{P/Q}$ denotes the 
tangent bundle along the fibres of $P/Q$, then $\sigma^*(T_{P/Q})$ is 
isomorphic to the bundle $N$ of Definition \ref{def1}; now compare 
\cite[Definition 1.1 and Lemma  2.1]{Ra}, which are stated for curves 
but generalize immediately to higher dimension by requiring $\sigma$ to be 
defined on the complement of a subvariety of codimension $\ge2$.
\end{em}\end{remark}

Recall that the standard definition of ${\mathcal M}_\xi$ is as a quotient $\pi:R\to{\mathcal M}_\xi$ 
of a Zariski-open subset $R$ of a Quot-scheme $Q$ by a free action of 
$\text{PGL}(M)$ for some $M$ (see \cite{Ses} or \cite[Chapter 5]{New2}). In fact 
$Q$ is a closed subset of $\text{Quot}({\mathcal O}^M;P)$, the Grothendieck scheme of quotients of 
${\mathcal O}^M$ with Hilbert polynomial $P$, for some positive integer $M$ and 
polynomial $P$. There is a natural action of $\text{GL}(M)$ on $Q$ which 
descends to an action of 
$\text{PGL}(M)$ and $R$ is a Zariski-open 
$\text{PGL}(M)$-invariant subset of $Q$; the restriction of the action 
of $\text{PGL}(M)$ to $R$ is free and defines the 
quotient $\pi$. There also exists a universal quotient on $X\times Q$ 
to which the action of 
$\text{GL}(M)$ lifts naturally, but $\lambda I$ acts by multiplication by 
$\lambda$, so the action does not descend to $\text{PGL}(M)$. The universal 
quotient restricts to a vector bundle on $X\times R$, which, after tensoring by 
the pullback of some bundle ${\mathcal O}(-m)$ on $X$, becomes a vector bundle 
${\mathcal E}_R$ such that ${\mathcal E}_R|_{X\times\{r\}}$ is the stable 
bundle $\pi(r)$ for all $r\in R$. As indicated above, $\text{GL}(M)$ acts on this 
bundle with $\lambda I$ acting by multiplication by $\lambda$, so 
$\text{PGL}(M)$ acts on the associated projective bundle 
$P({\mathcal E}_R)$. The quotient $\mathcal{PU}:=P({\mathcal E}_R)/
\text{PGL}(M)$ is then a projective bundle  whose restriction to 
$X\times\{E\}$ is isomorphic to $P(E)$ for all $E\in{\mathcal M}_\xi$. 
The uniqueness of $\mathcal{PU}$ as constructed in this way is a corollary 
of the following result which we shall need later.

\begin{proposition}\label{propnew1}
Let $\mathcal E$ be a vector bundle on $X\times Z$ such that the restriction of $\mathcal E$ to $X\times\{z\}$ is stable of rank $n$ and determinant $\xi$ for all $z\in Z$ and let $\phi_{\mathcal E}:Z\to\mathcal{M}_\xi$ be the corresponding morphism. Then the projective bundles $P({\mathcal E})$ and $(\text{id}_X\times \phi_{\mathcal E})^*(\mathcal{PU})$ are isomorphic.
\end{proposition}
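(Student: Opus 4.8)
The plan is to reduce the assertion to the defining property of $\mathcal{PU}$ by faithfully flat descent along the quotient map $\pi\colon R\to\mathcal{M}_\xi$. Recall that $\mathcal{PU}$ is constructed as the descent of $P(\mathcal{E}_R)$ along $\pi$, so that $(\mathrm{id}_X\times\pi)^*\mathcal{PU}\cong P(\mathcal{E}_R)$. First I would form the fibre product $\widetilde Z:=Z\times_{\mathcal{M}_\xi}R$, with projections $p\colon\widetilde Z\to Z$ and $q\colon\widetilde Z\to R$ satisfying $\pi\circ q=\phi_{\mathcal{E}}\circ p$. Since $\pi$ is a principal $\mathrm{PGL}(M)$-bundle, and in particular faithfully flat, the base change $p$ is faithfully flat as well, and hence so is $\mathrm{id}_X\times p$ on $X\times\widetilde Z$.

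Next, on $X\times\widetilde Z$ I would compare the two vector bundles $(\mathrm{id}_X\times p)^*\mathcal{E}$ and $(\mathrm{id}_X\times q)^*\mathcal{E}_R$. For each $\widetilde z\in\widetilde Z$ lying over $z\in Z$ and $r\in R$ one has $\phi_{\mathcal{E}}(z)=\pi(r)$, so the two bundles restrict to isomorphic stable bundles on $X\times\{\widetilde z\}$. Because a stable bundle is simple, the sheaf $\mathrm{pr}_{\widetilde Z*}\mathcal{H}om\bigl((\mathrm{id}_X\times q)^*\mathcal{E}_R,(\mathrm{id}_X\times p)^*\mathcal{E}\bigr)$ is a line bundle $L$ on $\widetilde Z$ whose evaluation gives an isomorphism $(\mathrm{id}_X\times q)^*\mathcal{E}_R\otimes\mathrm{pr}_{\widetilde Z}^*L\cong(\mathrm{id}_X\times p)^*\mathcal{E}$. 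Projectivizing removes the twist by $L$; combining this with $(\mathrm{id}_X\times\pi)^*\mathcal{PU}\cong P(\mathcal{E}_R)$ and the identity $\pi\circ q=\phi_{\mathcal{E}}\circ p$ produces an isomorphism of projective bundles
\[
\Psi\colon\ (\mathrm{id}_X\times p)^*(\mathrm{id}_X\times\phi_{\mathcal{E}})^*\mathcal{PU}\ \xrightarrow{\ \sim\ }\ (\mathrm{id}_X\times p)^*P(\mathcal{E})
\]
on $X\times\widetilde Z$.

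Finally I would descend $\Psi$ along the faithfully flat morphism $\mathrm{id}_X\times p$ to obtain $(\mathrm{id}_X\times\phi_{\mathcal{E}})^*\mathcal{PU}\cong P(\mathcal{E})$ on $X\times Z$. I expect this descent step to be the main obstacle, since it requires verifying the cocycle condition for $\Psi$ on $X\times(\widetilde Z\times_Z\widetilde Z)$. The point that makes it work is \emph{uniqueness}: over each geometric point the isomorphism of projective bundles is the projectivization of an isomorphism of simple bundles, hence determined up to a scalar, so the projective isomorphism itself is unique. It follows that the two pullbacks of $\Psi$ to $X\times(\widetilde Z\times_Z\widetilde Z)$ agree pointwise and therefore coincide, which is exactly the cocycle condition. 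The same uniqueness yields the uniqueness of $\mathcal{PU}$ mentioned before the statement.
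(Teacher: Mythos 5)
Your overall strategy is the same as the paper's: form the fibre product $\widetilde Z=Z\times_{\mathcal{M}_\xi}R$ (the paper's diagram \eqref{enew3}), use simplicity of stable bundles to produce a line bundle on $\widetilde Z$ and an evaluation isomorphism identifying the two pulled-back families up to a twist, projectivize, and descend to $X\times Z$. The gap is in the descent step, which you rightly flag as the main obstacle but justify with a false claim: that an isomorphism of projective bundles whose fibres are projectivizations of stable bundles is ``determined up to a scalar, so the projective isomorphism itself is unique''. Uniqueness up to scalar holds for \emph{linear} isomorphisms between simple bundles, but not for isomorphisms of the associated projective bundles: by Lemma \ref{prop0} and Remark \ref{remnew2}, whenever $E\cong E\otimes L$ with $L$ a nontrivial $n$-torsion line bundle, $P(E)$ admits a nontrivial automorphism, and such $E$ exist (the proof of Lemma \ref{prop0} constructs them as direct images from cyclic covers). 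Nothing prevents $\phi_{\mathcal E}$ from mapping $Z$ entirely into this locus; already for $Z$ a single point mapping to such an $E$, the isomorphism asserted by the Proposition is genuinely non-unique, so no argument relying only on uniqueness of projective-bundle isomorphisms can establish the cocycle condition.

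What simplicity actually gives is that two projective isomorphisms coincide if both are projectivizations of linear maps out of a \emph{common} vector bundle. To apply this to $\mathrm{pr}_1^*\Psi$ and $\mathrm{pr}_2^*\Psi$ on $X\times(\widetilde Z\times_Z\widetilde Z)$, you would need both to be, fibrewise, projectivized linear maps from one and the same bundle; but their common source is a fibre of $\mathcal{PU}$, which carries no canonical linear structure (this is precisely what fails when $\gcd(n,d)>1$). The two maps factor through identifications with $P(\mathcal{E}_R|_{X\times\{r_1\}})$ and $P(\mathcal{E}_R|_{X\times\{r_2\}})$ at two distinct points $r_1\neq r_2$ of a single $\text{PGL}(M)$-orbit, and the comparison between these identifications is the $\text{PGL}(M)$-action on $P(\mathcal{E}_R)$. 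The missing ingredient is that this action is fibrewise induced by linear isomorphisms, i.e.\ that it comes from the $\text{GL}(M)$-linearization of $\mathcal{E}_R$; granting this, the comparison automorphism is the projectivization of an automorphism of a simple bundle, hence trivial, and your cocycle condition holds. At that point your proof is no longer really different from the paper's: for a principal-bundle quotient the cocycle condition for $\Psi$ is literally equivalent to $\text{PGL}(M)$-equivariance of the evaluation isomorphism, which is what the paper verifies directly (the $\lambda$-scaling on $\mathcal{E}_R$ cancels against the induced $\lambda^{-1}$-scaling on the line bundle $\mathcal{L}$, so the action on the twisted bundle descends to $\text{PGL}(M)$) before taking quotients.
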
 

\begin{proof} We have a pullback diagram
\begin{equation}\label{enew3}
\begin{matrix}
Y\ \ &\stackrel{\phi_Y}{\longrightarrow}& R\\
\Big\downarrow\pi' && \ \ \Big\downarrow\pi\\
Z\ \ &\stackrel{\phi_{\mathcal E}}{\longrightarrow}&\ \ {\mathcal M}_\xi.\end{matrix}
\end{equation}
The vector bundles $(\text{id}_X\times\phi_Y)^*{\mathcal E}_R$ and 
$(\text{id}_X\times\pi')^*{\mathcal E}$ 
on $X\times Y$ have the property that their restrictions to $X\times\{y\}$ are 
stable and isomorphic for all $y\in Y$. If we denote by $p_Y:X\times Y\to Y$ 
the natural projection, it follows that 
$$p_{Y*}(Hom((\text{id}_X\times\phi_Y)^*{\mathcal E}_R),
(\text{id}_X\times\pi')^*{\mathcal E})$$
is a line bundle ${\mathcal L}$ on $Y$, and there is a natural isomorphism
\begin{equation}\label{e600}
(\text{id}_X\times\phi_Y)^*{\mathcal E}_R\otimes p_Y^*{\mathcal L}
\longrightarrow (\text{id}_X\times\pi')^*{\mathcal E}.
\end{equation}
Moreover $\text{GL}(M)$ acts naturally on both $(\text{id}_X\times\phi_Y)^*{\mathcal E}_R$ and 
$(\text{id}_X\times\pi')^*{\mathcal E}$; in the first case, $\lambda I$ acts 
by multiplication by $\lambda$, in the second by the identity. There is also 
a natural action of $\text{GL}(M)$ on ${\mathcal L}$ and an induced action on 
the left-hand side of (\ref{e600}) which descends to $\text{PGL}(M)$. In particular 
(\ref{e600}) is $\text{PGL}(M)$-equivariant and the same holds for the 
corresponding isomorphism of projective bundles. Now take quotients.
\end{proof}

\begin{corollary}\label{cornew1}
Suppose that $\pi':R'\to\mathcal{M}_\xi$ defines $\mathcal{M}_\xi$ as a quotient of $R'$ by a free 
action of ${\rm PGL}(M')$ and
\begin{itemize}
\item $\mathcal{E}_{R'}$ is a vector bundle on $X\times R'$ such that 
$\mathcal{E}_{R'}|_{X\times\{r'\}}$ is the stable bundle $\pi'(r')$ for all $r'\in R'$;
\item the action of ${\rm PGL}(M')$ lifts to 
$P(\mathcal{E}_{R'})$.
\end{itemize}
Then $P(\mathcal{E}_{R'})/{\rm PGL}(M')\cong\mathcal{PU}$.
\end{corollary}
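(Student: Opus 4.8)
The plan is to obtain this as an immediate consequence of Proposition \ref{propnew1}, the only genuine work being to track the $\text{PGL}(M')$-action. I would apply that proposition with $Z=R'$ and $\mathcal E=\mathcal{E}_{R'}$. Since by hypothesis the restriction of $\mathcal{E}_{R'}$ to $X\times\{r'\}$ is the stable bundle $\pi'(r')$ of rank $n$ and determinant $\xi$, the classifying morphism $\phi_{\mathcal{E}_{R'}}$ coincides with $\pi'\colon R'\to\mathcal{M}_\xi$, and the proposition yields an isomorphism of projective bundles on $X\times R'$,
$$P(\mathcal{E}_{R'})\;\cong\;(\text{id}_X\times\pi')^*(\mathcal{PU}).$$
Now $\pi'$ is the quotient map for the free $\text{PGL}(M')$-action, so $\text{id}_X\times\pi'$ is $\text{PGL}(M')$-invariant and the right-hand side carries a canonical equivariant structure whose $\text{PGL}(M')$-quotient is again $\mathcal{PU}$; by hypothesis the left-hand side carries the given equivariant structure coming from the lift of the action to $P(\mathcal{E}_{R'})$, with quotient $P(\mathcal{E}_{R'})/\text{PGL}(M')$. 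Hence it suffices to check that the displayed isomorphism is $\text{PGL}(M')$-equivariant, for then passing to $\text{PGL}(M')$-quotients produces the desired isomorphism $P(\mathcal{E}_{R'})/\text{PGL}(M')\cong\mathcal{PU}$.

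The equivariance is the one point requiring argument, and I would establish it by rigidity. On each slice $X\times\{r'\}$ both projective bundles are the projectivization of a stable, hence simple, bundle, so the adjoint bundle $\mathit{ad}(P(\mathcal{E}_{R'}))$ has no nonzero sections along the fibres of $p\colon X\times R'\to R'$; therefore $p_*\mathit{ad}=0$ and the group of global automorphisms of $P(\mathcal{E}_{R'})$ over $X\times R'$ has trivial identity component (in characteristic $0$ a finite-type group scheme with vanishing Lie algebra is étale). Given $g\in\text{PGL}(M')$, the $g$-translate of the isomorphism above differs from the isomorphism itself by such a global automorphism $\theta_g$; the assignment $g\mapsto\theta_g$ is an algebraic map from the connected group $\text{PGL}(M')$ carrying the identity to the identity, so its image is connected and lies in the trivial identity component. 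Thus $\theta_g$ is the identity for every $g$, which is exactly the asserted equivariance.

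Granting this, the concluding step is formal: descent along the $\text{PGL}(M')$-torsor $X\times R'\to X\times\mathcal{M}_\xi$ — equivalently the ``take quotients'' step used in the proof of Proposition \ref{propnew1} — turns the equivariant isomorphism into an isomorphism $P(\mathcal{E}_{R'})/\text{PGL}(M')\cong\mathcal{PU}$ on $X\times\mathcal{M}_\xi$. The main obstacle is precisely the verification of $\text{PGL}(M')$-equivariance; everything else is bookkeeping. I would remark that one could sidestep the rigidity argument by re-running the proof of Proposition \ref{propnew1} directly on the fibre product $R\times_{\mathcal{M}_\xi}R'$, which carries commuting free actions of $\text{PGL}(M)$ and $\text{PGL}(M')$: the line bundle $\mathcal L$ and the natural isomorphism built there are then manifestly equivariant for both groups, and one takes the two quotients in turn (first by $\text{PGL}(M)$ to recover the isomorphism of the first paragraph, then by $\text{PGL}(M')$). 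This makes the equivariance transparent at the cost of reproducing the construction.
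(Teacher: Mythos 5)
Your proposal reaches the result and, like the paper, factors everything through Proposition \ref{propnew1} applied with $Z=R'$ and $\mathcal{E}=\mathcal{E}_{R'}$ (so that $\phi_{\mathcal{E}_{R'}}=\pi'$), followed by taking $\text{PGL}(M')$-quotients; the difference lies in how equivariance of the isomorphism $P(\mathcal{E}_{R'})\cong(\text{id}_X\times\pi')^*(\mathcal{PU})$ is secured. The paper's proof is, in essence, exactly your closing remark: it observes that every ingredient in the proof of Proposition \ref{propnew1} --- the fibre product $Y$ of \eqref{enew3} (which, for $Z=R'$, is $R'\times_{\mathcal{M}_\xi}R$), the line bundle $\mathcal{L}$, and the natural isomorphism \eqref{e600} --- is natural and hence carries the additional $\text{PGL}(M')$-action, so the isomorphism \emph{can be chosen} to be $\text{PGL}(M')$-equivariant, and one then takes quotients. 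Your main route instead argues a posteriori that \emph{any} isomorphism between these two projective bundles is automatically equivariant. That is a genuinely different (and in one respect stronger) argument: it shows the isomorphism, hence also the descended identification with $\mathcal{PU}$, is unique, in the spirit of Corollary \ref{cor1}, and it separates the construction cleanly from the group action; the price is that you must prove a rigidity statement for automorphisms of $P(\mathcal{E}_{R'})$.

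That rigidity step is the one place needing repair. You speak of the identity component of ``the group of global automorphisms of $P(\mathcal{E}_{R'})$ over $X\times R'$'' and invoke the fact that a finite-type group scheme with vanishing Lie algebra is \'etale; but $X\times R'$ is not proper, so representability of this automorphism functor by a group scheme of finite type is not available, and without it the phrases ``identity component'' and ``algebraic map $g\mapsto\theta_g$'' are not yet meaningful. The conclusion $\theta_g=\mathrm{id}$ is nevertheless correct and can be obtained with tools already in the paper. For instance: for any $r'$ with $\pi'(r')\in\mathcal{Z}$, the restriction $\theta_g|_{X\times\{r'\}}$ is an automorphism of $P(E)$ with $E\in\mathcal{Z}$, hence trivial by Lemma \ref{prop0}; since $\pi'$ is an open surjection onto the irreducible variety $\mathcal{M}_\xi$ with irreducible fibres, $R'$ is irreducible and $\pi'^{-1}(\mathcal{Z})$ is dense, so $\theta_g$ agrees with the identity on a dense open subset of the reduced separated scheme $P(\mathcal{E}_{R'})$ and therefore everywhere (this version does not even use connectedness of $\text{PGL}(M')$). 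Alternatively, keep your connectedness argument but run it fibrewise: for stable $E$ the group $\mathrm{Aut}(P(E)/X)$ is finite (each automorphism corresponds to an isomorphism $E\to E\otimes L$ with $L$ among the finitely many $n$-torsion line bundles, unique up to scalar by simplicity), the level sets of $g\mapsto\theta_g|_{X\times\{r'\}}$ are closed and partition $\text{PGL}(M')$, so connectedness forces $\theta_g|_{X\times\{r'\}}=\mathrm{id}$ for every $g$ and $r'$. With either repair your proof is complete.
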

\begin{proof} Apply Proposition \ref{propnew1} to ${\mathcal E}_{R'}$ to get an isomorphism 
$$P(\mathcal{E}_{R'})\cong\left(\text{id}_X\times\phi_{\mathcal{E}_{R'}}\right)
^*(\mathcal{PU}).$$ 
It follows from the proof of the proposition that the isomorphism can be chosen to be 
$\text{PGL}(M')$-equivariant. Now take quotients.
\end{proof}

In view of this, we shall call $\mathcal{PU}$ the {\em projective Poincar\'e bundle} on $X\times{\mathcal M}_\xi$. 
It should be noted that it is not the same as the universal projective bundle constructed in \cite{BBNN}, which exists on a certain open set in the moduli space of projective bundles with the appropriate topological invariants. This open set 
is a quotient by a finite group of the Zariski-open set in $\mathcal{M}_\xi$ constructed in the following lemma.

\begin{lemma}\label{prop0}
There is a non-empty Zariski-open subset $\mathcal Z$ of ${\mathcal
M}_\xi$ such that, for each stable vector bundle $E \in
\mathcal{Z}$, the corresponding projective bundle $P(E)$ does not
admit any nontrivial automorphism.
\end{lemma}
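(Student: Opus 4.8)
The plan is to translate the triviality of $\mathrm{Aut}(P(E))$ into a condition on $E$ and then exhibit a nonempty Zariski-open locus on which that condition holds. First I would identify the automorphism group of $P(E)$ as a projective bundle over $X$ (i.e. automorphisms covering $\mathrm{id}_X$). Since $E$ is stable it is simple, so $\mathrm{Aut}(E)=\mathbb{C}^{*}$, and the long exact cohomology sequence attached to $1\to\mathbb{G}_m\to GL(E)\to PGL(E)\to 1$ on $X$ shows that the connecting map is injective and identifies $\mathrm{Aut}(P(E))$ with the group $\{L\in\mathrm{Pic}(X):E\otimes L\cong E\}$. Concretely, an automorphism of $P(E)$ is the projectivization of an isomorphism $E\to E\otimes L$, and two such isomorphisms give the same automorphism precisely when they differ by a scalar. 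Comparing determinants in $E\otimes L\cong E$ yields $L^{n}\cong\mathcal{O}_X$, so every such $L$ lies in the finite group $J_n:=\{L\in\mathrm{Pic}^0(X):L^{n}\cong\mathcal{O}_X\}$. Hence $P(E)$ admits a nontrivial automorphism if and only if $E\otimes L\cong E$ for some nontrivial $L\in J_n$.

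Next, for each nontrivial $L\in J_n$ I would study the locus $\mathcal{M}_\xi^{L}:=\{E\in\mathcal{M}_\xi:E\otimes L\cong E\}$. Because $L^{n}\cong\mathcal{O}_X$, tensoring by $L$ preserves the determinant $\xi$ and so defines an automorphism $\sigma_L:\mathcal{M}_\xi\to\mathcal{M}_\xi$; the set $\mathcal{M}_\xi^{L}$ is exactly its fixed-point locus and is therefore closed. The crucial point, and the \emph{main obstacle}, is to prove that $\mathcal{M}_\xi^{L}$ is a \emph{proper} subset. For this I would use the standard descent argument: if $L$ has exact order $k$ and $\pi:Y\to X$ is the connected cyclic étale cover of degree $k$ determined by $L$ (so that $\pi^{*}L\cong\mathcal{O}_Y$), then $E\otimes L\cong E$ forces $E\cong\pi_{*}F$ for some vector bundle $F$ of rank $n/k$ on $Y$. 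Indeed, one rescales the isomorphism $E\to E\otimes L$ so that its $k$-fold iterate is the identity (using simplicity of $E$), which turns $E$ into a module over $\pi_{*}\mathcal{O}_Y\cong\bigoplus_{i=0}^{k-1}L^{i}$ and descends it to $Y$. In particular $\mathcal{M}_\xi^{L}$ is empty unless $k\mid n$, and when $k\mid n$ it lies in the image of the pushforward map $F\mapsto\pi_{*}F$.

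Finally I would close with a dimension count. By Riemann--Hurwitz the genus $\tilde g$ of $Y$ satisfies $\tilde g-1=k(g-1)$, and the moduli space of rank-$(n/k)$ bundles on $Y$ has dimension $(n/k)^{2}(\tilde g-1)+1=\tfrac{n^{2}}{k}(g-1)+1$. Since $k\ge 2$ gives $\tfrac{n^{2}}{k}\le\tfrac{n^{2}}{2}<n^{2}-1$ for $n\ge 2$, this quantity is strictly less than $\dim\mathcal{M}_\xi=(n^{2}-1)(g-1)$ whenever $g\ge 3$; hence $\dim\mathcal{M}_\xi^{L}<\dim\mathcal{M}_\xi$ and each $\mathcal{M}_\xi^{L}$ is a proper closed subset. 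Because $J_n$ is finite and $\mathcal{M}_\xi$ is irreducible, the complement $\mathcal{Z}:=\mathcal{M}_\xi\setminus\bigcup_{L\in J_n\setminus\{\mathcal{O}_X\}}\mathcal{M}_\xi^{L}$ is a nonempty Zariski-open set, and by the first step $P(E)$ has no nontrivial automorphism for every $E\in\mathcal{Z}$, as required.
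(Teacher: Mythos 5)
Your proposal is correct and takes essentially the same route as the paper: a nontrivial automorphism of $P(E)$ yields $E\cong E\otimes L$ with $L$ a nontrivial $n$-torsion line bundle, such $E$ descend through the cyclic \'etale cover determined by $L$, and a dimension count (using $g\ge3$) shows each bad locus is a proper closed subset. The one point you elide, which the paper handles by citing Narasimhan--Ramanan, is that the bundle $F$ on $Y$ with $\pi_*F\cong E$ is itself stable, so that the relevant $F$ really do form a family of dimension $(n/k)^2(\tilde g-1)+1$; this is standard (since $\pi$ is \'etale, a destabilizing subsheaf of $F$ pushes forward to a destabilizing subsheaf of $E$), but it is needed to make your appeal to ``the moduli space of rank-$(n/k)$ bundles on $Y$'' legitimate.
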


\begin{proof}
Let $E \in {\mathcal M}_\xi$ be such that the associated
projective bundle $P(E)$ admits a nontrivial
automorphism
$
\tau' : P(E) \longrightarrow P(E)
$.
The automorphism $\tau'$ gives an isomorphism of vector bundles
\begin{equation}\label{e3}
\tau : E \longrightarrow E\otimes L ,
\end{equation}
where $L$ is some line bundle of degree $0$. From the given
condition that $\tau'$ is nontrivial it follows that $L \not\cong
{\mathcal O}_X.$ Taking the top exterior product of both sides of
\eqref{e3} we conclude that $L^{\otimes n}\cong {\mathcal O}_X$.

Suppose now that $\tau$ exists with $L$ of order $r\ge2$ as an element of the Jacobian $J(X)$. 
Choose an isomorphism $L^{\otimes r}\stackrel{\cong}{\longrightarrow}\mathcal{O}_X$ and let $s_r$ be the section of $L^{\otimes r}$ corresponding to the constant section $1$ of $\mathcal{O}_X$. Via this isomorphism $\tau^r$ 
defines an automorphism of $E$, which has the form $\lambda\,\text{id}_E$ since $E$ is stable. 
Let $\sigma:Y\to X$ be the cyclic covering defined as the subvariety of the total space of $L$ given by the 
equation $t^r-\lambda s_r=0$. Then $E$ is the direct image of a vector bundle $V$ on $Y$ of rank $\frac{n}{r}$ and degree $d$; moreover $V$ is necessarily stable (see \cite{NR1} for details of the construction; also \cite[Example 3.4 and 
Proposition 3.6]{BNR} for the case $r=n$). Note that $\sigma$ is 
determined by $L$ and that there are only finitely many choices for $L$ (up to isomorphism). Since $V$ depends on 
$\left(\frac{n}{r}\right)^2(g(Y)-1)+1$ parameters, it follows that the stable vector bundles $E$ of determinant $\xi$ arising in this way depend on at most
$\nu$ parameters, where
\begin{eqnarray*}
\nu&=&\frac{n^2}{r^2}(g(Y)-1)+1-g\\
&=&\frac{n^2}{r^2}r(g-1)+1-g\\
&=&\left(\frac{n^2}{r}-1\right)(g-1)<(n^2-1)(g-1)=\dim\mathcal{M}_{\xi}.
\end{eqnarray*}
This completes the proof.
\end{proof}

As an immediate consequence we have

\begin{corollary}\label{cor1}
The projective Poincar\'e bundle $\mathcal{PU}$ on $X\times
{\mathcal M}_\xi$ does not admit any nontrivial
automorphism.\qed\end{corollary}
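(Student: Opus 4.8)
The plan is to deduce the corollary directly from Lemma \ref{prop0} by a density argument. Let $\Phi:\mathcal{PU}\to\mathcal{PU}$ be an automorphism of the projective bundle covering the identity of $X\times\mathcal{M}_\xi$. For each $E\in\mathcal{M}_\xi$, restriction of $\Phi$ to $X\times\{E\}$ yields an automorphism $\Phi_E$ of the projective bundle $\mathcal{PU}|_{X\times\{E\}}\cong P(E)$. First I would invoke Lemma \ref{prop0}: for every $E$ in the non-empty Zariski-open subset $\mathcal Z$, the bundle $P(E)$ admits no nontrivial automorphism, so $\Phi_E=\mathrm{id}$. Hence $\Phi$ coincides with the identity on the subset $X\times\mathcal Z$.

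The remaining step is to promote this to global triviality. Since $\mathcal{M}_\xi$ is irreducible, $X\times\mathcal{M}_\xi$ is irreducible and $X\times\mathcal Z$ is dense in it. The automorphism $\Phi$ may be regarded as a section over $X\times\mathcal{M}_\xi$ of the group scheme of bundle automorphisms of $\mathcal{PU}$, whose fibres are copies of $\mathrm{PGL}(n)$; this group scheme is separated over the (reduced, irreducible) base, so the locus on which the section $\Phi$ agrees with the identity section is closed. Being both closed and dense, it is all of $X\times\mathcal{M}_\xi$, whence $\Phi=\mathrm{id}$.

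The only point requiring any care is this last separatedness argument; everything else is an immediate translation of Lemma \ref{prop0}. Equivalently, one may phrase it by noting that two morphisms of varieties into a separated target (here $\Phi$ and the identity, compared fibrewise in $\mathrm{PGL}(n)$) that agree on a dense subset of an irreducible reduced source must coincide. I expect no genuine obstacle here, which is why the statement is flagged as an immediate consequence.
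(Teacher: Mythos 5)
Your proof is correct and matches the paper's intent: the paper states Corollary \ref{cor1} as an immediate consequence of Lemma \ref{prop0} (with no written argument), and the intended deduction is precisely yours --- fibrewise triviality of the automorphism over the dense open set $X\times\mathcal{Z}$, then extension to all of $X\times\mathcal{M}_\xi$ by irreducibility and separatedness of the automorphism group scheme. Your spelled-out rigidity step is the standard justification the authors left implicit, so there is nothing to add.
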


\begin{remark}\label{remnew1}\begin{em}
A precise description of the variety of stable bundles $E$ for which 
$E\cong E\otimes L$ for a fixed $L$ is given in \cite[Proposition 3.3]{NR1}. 
For an analytic proof of Lemma \ref{prop0}, see \cite[Theorem 1.2 and Proposition 1.6]{Li}.
An algebraic proof in the coprime case is given in \cite[Proposition 3.8]{BBN}. In the 
topologically trivial case (i.e. $d$ is divisible by $n$), the lemma is also a special case 
of \cite[Proposition 2.6]{BBNN}.
\end{em}\end{remark}

\begin{remark}\label{remnew2}\begin{em}
If $E\cong E\otimes L$, then we have a non-zero homomorphism $L\to \text{End}\,E$. 
Now suppose that $E$ is stable. Since we are in characteristic $0$, $\text{End}\,E$ is semistable of degree $0$ (see \cite{RR}), 
so this homomorphism embeds $L$ as a subbundle of $\text{End}\,E$. 
Now $\text{End}\,E\cong\mathcal{O}\oplus\text{ad}\,E$. Hence, if $L\not\cong\mathcal{O}$, $L$ embeds as a subbundle of $\text{ad}\,E$. We deduce from Lemma \ref{prop0} and its proof that, if $E\in\mathcal{M}_\xi$, then the following conditions are equivalent:
\begin{itemize}
\item $E\in\mathcal{Z}$, where $\mathcal{Z}$ is the Zariski-open subset of $\mathcal{M}_\xi$ consisting of those $E$ for which $E\cong E\otimes L\Rightarrow L\cong\mathcal{O}$; 
\item $P(E)$ admits no non-trivial automorphism;
\item $\text{ad}\,E$ possesses no line subbundle of degree $0$
\end{itemize} 
(see also \cite[Proposition 1.6]{Li}, \cite[Proposition 3.10]{BBN}).
\end{em}\end{remark}

\begin{remark}\label{remnew11}\begin{em} It follows from Remark \ref{remnew2} (see \cite[Corollary 3.11]{BBN}) that, if 
$E\in\mathcal{Z}$ and $n=2$, then $\text{ad}\,E$ is stable. In fact, more generally, we have
\end{em}\end{remark}

\begin{lemma}\label{lemnew6} For any $n$ and any irreducible homomorphism
  $\rho:{\rm PGL}(n)\to H$ with $H$ reductive, the principal $H$-bundle $P(E)^\rho$ is stable for general $E\in\mathcal{M}_\xi$.
\end{lemma}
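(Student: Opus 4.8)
The plan is to analyse $P(E)$ through its \emph{monodromy group} in the sense of \cite{BPS}. Since $E$ is stable, the adjoint bundle $\text{ad}\,E$ is semistable of degree $0$ (see \cite{RR}), so $P(E)$ is a semistable principal $\text{PGL}(n)$-bundle and therefore carries a well-defined reductive monodromy group $M:=M(E)\subseteq\text{PGL}(n)$, defined up to conjugacy, to which $P(E)$ admits a reduction of structure group $E_M$. The strategy splits into two parts: a criterion reducing stability of $P(E)^\rho$ to a statement about $M$, and a dimension count showing that $M=\text{PGL}(n)$ for general $E$.

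For the criterion I would invoke the characteristic property of the monodromy reduction: because $E_M$ has full monodromy $M$, the induced bundle $E_M(\sigma)$ is stable for every homomorphism $\sigma:M\to H$ whose image lies in no proper parabolic subgroup of $H$. Semistability of $P(E)^\rho=E_M(\rho|_M)$ is already guaranteed by \cite{RR}, so the only remaining possibility to exclude is a degree-$0$ reduction to a parabolic, which corresponds exactly to $\rho(M)$ lying in a proper parabolic of $H$. Consequently, if $M=\text{PGL}(n)$, then $\rho(M)=\rho(\text{PGL}(n))$, which by irreducibility of $\rho$ lies in no proper parabolic, and $P(E)^\rho$ is stable. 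Thus it suffices to prove that $M=\text{PGL}(n)$ for general $E$, a condition on $E$ alone that handles every irreducible $\rho$ simultaneously.

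The heart of the argument is to show that the locus
$$\mathcal{S}=\{E\in\mathcal{M}_\xi : M(E)\subsetneq\text{PGL}(n)\}$$
is not dense. If $M(E)$ is a proper reductive subgroup $M$, then $P(E)$ reduces to $M$, and decomposing the adjoint representation as $\mathfrak{g}=\mathfrak{m}\oplus\mathfrak{m}^{\perp}$ under $M$ shows that $\text{ad}\,E$ acquires the proper nonzero degree-$0$ subbundle $\text{ad}(E_M)=E_M(\mathfrak{m})$; in particular $\text{ad}\,E$ is not stable. I would therefore bound the dimension of $\mathcal{S}$ by bounding, for each proper reductive $M$, the family of semistable $\text{PGL}(n)$-bundles admitting a reduction to $M$. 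Such a family has dimension at most $(\dim M)(g-1)$, which is strictly less than $(n^2-1)(g-1)=\dim\mathcal{M}_\xi$, since a proper reductive subgroup of the connected group $\text{PGL}(n)$ has $\dim M<n^2-1$ (otherwise $M$ would contain the identity component). This is the same dimension-counting philosophy as in Lemma \ref{prop0}.

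The step I expect to be hardest is making this count uniform over all $M$. There are infinitely many conjugacy classes of reductive subgroups, so I would reduce to finitely many cases by noting that every proper reductive subgroup is conjugate into one of the finitely many maximal proper connected reductive subgroups of $\text{PGL}(n)$ (a reductive subgroup contained in a parabolic is conjugate into its Levi factor, itself reductive), treating non-connected or finite monodromy groups, whose loci have even smaller dimension, separately. One must also pass between reductions of the $\text{PGL}(n)$-bundle $P(E)$ and the fixed-determinant data defining $\mathcal{M}_\xi$; here the finite morphism $E\mapsto P(E)$ from $\mathcal{M}_\xi$ to the moduli of semistable $\text{PGL}(n)$-bundles preserves dimensions and allows the count to be carried out on the $\text{PGL}(n)$ side, with the fixed determinant only cutting dimension down. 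Taking the union of the finitely many bounded-dimensional loci shows that $\mathcal{S}$ is contained in a proper closed subvariety, so $M(E)=\text{PGL}(n)$ for general $E$, which by the criterion above completes the proof.
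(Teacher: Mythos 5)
Your overall architecture is exactly the paper's: BPS monodromy, \cite[Lemma 4.13]{BPS} as the criterion (full monodromy plus irreducibility of $\rho$ forces stability of $P(E)^\rho$), and a dimension count over proper reductive subgroups. The criterion half of your argument is fine. The gap is in the step you yourself flagged as hardest: the claim that every proper reductive subgroup of $\text{PGL}(n)$ is conjugate into one of finitely many maximal proper \emph{connected} reductive subgroups is false, and the non-connected cases are not a negligible side issue. Already in $\text{PGL}(2)$, the normalizer $N(T)$ of a maximal torus and the finite subgroups $S_4$, $A_5$ are proper reductive subgroups contained in no proper connected reductive subgroup (the only ones available are tori) and in no Borel. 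Moreover such non-connected monodromy genuinely occurs for stable bundles: when $E\cong E\otimes L$ with $L$ torsion of order $r$, the bundle $P(E)$ reduces to a non-connected group of normalizer type --- these are precisely the bundles in the proof of Lemma \ref{prop0}, and they form positive-dimensional families. Since the non-connected and finite reductive subgroups fall into infinitely many conjugacy classes (all finite subgroups, for instance), your concluding step --- ``the union of the finitely many bounded-dimensional loci is a proper closed subvariety'' --- does not go through; with infinitely many loci you can neither take a finite union nor retain closedness.

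The paper sidesteps this entirely: it uses only that there are \emph{countably} many conjugacy classes of proper reductive subgroups, so the bad locus lies in a countable union of proper subvarieties; over the uncountable field $\mathbb{C}$ this cannot exhaust the irreducible variety $\mathcal{M}_\xi$, which produces a single $E$ with monodromy equal to $\text{PGL}(n)$, and then openness of stability (for each fixed $\rho$) upgrades this to general $E$. Your proof is repaired by exactly this substitution, keeping everything else you wrote. One further minor slip: the family of semistable bundles reducing to a proper reductive $M$ has dimension bounded by $\dim M(g-1)+\dim Z(M)$, not $\dim M(g-1)$ --- bundles reducing to a $k$-dimensional torus already form families of dimension $kg$; the paper's count includes the centre term, and the strict inequality with $(n^2-1)(g-1)$ still holds, so this does not affect the conclusion, but as stated your bound is incorrect for any $M$ with positive-dimensional centre (tori, Levi subgroups).
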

\begin{proof}

We recall the concept of monodromy introduced in
\cite{BPS}. For a stable $G$-bundle $E_G$ on $X$, its monodromy is a reductive subgroup
of $G$ (see \cite[Lemma 4.13]{BPS}; any irreducible subgroup is
automatically reductive). Hence all stable $G$-bundles whose monodromy is a proper
subgroup of $G$ admit reduction of structure group
to some proper reductive subgroup of $G$. There are countably many proper
reductive subgroups $G'$ of $G$ up to
conjugation, for each of which 
\begin{eqnarray*}
\dim M_X(G)&=&\dim G(g-1)+ \dim(\text{centre of }G)\\ &>& \dim G'(g-1)+
\dim(\text{centre of }G')=\dim M_X(G'),\end{eqnarray*}
where $M_X(G)$ (resp. $M_X(G')$) denotes the moduli space of stable $G$-bundles
(resp. $G'$-bundles).
Therefore, all stable $G$-bundles with monodromy a proper
subgroup of $G$ are contained in a countable union of subvarieties.

In our case, we take $G=\text{PGL}(n)$ and deduce that there exists a bundle $E\in\mathcal{M}_{\xi}$ such
that the monodromy of $P(E)$ is $\text{PGL}(n)$. It follows from \cite[Lemma
4.13]{BPS} that $P(E)^\rho$ is stable for this $E$ and hence for general $E$
since stability is an open property.
\end{proof}

\begin{remark}\label{remnew14}\begin{em}
For an analytic proof of this lemma, recall that the principal $H$-bundle given by any irreducible unitary representation of the fundamental group is stable \cite[Theorem 7.1]{Ra}  (here {\em unitary} means that the image of the representation is contained in a maximal compact subgroup). The result follows by an argument of Subramanian \cite[\S3]{S} (see also \cite[proof of Theorem 2.7]{BBN}). \end{em}\end{remark} 

\begin{remark}\label{remnew}\begin{em}
For any homomorphism $\rho:\text{PGL}(n)\to H$, the induced Poincar\'e $H$-bundle $\mathcal{U}^\rho$ on $X\times\mathcal{M}_\xi$ has the property that $\mathcal{U}^\rho|_{X\times\{E\}}$ is isomorphic to $P(E)^\rho$ for all $E\in\mathcal{M}_\xi$. In particular $\text{ad}\,\mathcal{U}|_{X\times\{E\}}\cong\text{ad}\,E$. The bundle $\text{ad}\,\mathcal{U}$ can also be constructed by noting that the action of $\text{PGL}(M)$ on $R$ lifts to an action on $\text{ad}\,\mathcal{E}_R$, which therefore descends to a bundle on $X\times\mathcal{M}_\xi$. This bundle coincides with $\text{ad}\,\mathcal{U}$.
\end{em}\end{remark}

\section{Stability of Poincar\'e bundles}\label{sec3} 

In this section, we shall prove our results on the stability of Poincar\'e bundles. We begin by recalling two constructions from 
\cite{BBGN}.

Let $x \in X$ and let $F$ be a vector bundle over $X$ of rank $n$ such that
$\det F\cong\xi(x)$. Let
${\mathbb P}:=P(F_x^*)$
be the projective space parametrizing the hyperplanes in the
fibre $F_x$. Let
$
p:X\times {\mathbb P} \, \longrightarrow\,
X
$
be the projection and
$
\iota\, :\, {\mathbb P}\, \hookrightarrow\, X\times {\mathbb P}
$
the inclusion map defined by $z\, \longmapsto\,(x\, ,z)$.

We have the following diagram of homomorphisms of sheaves on
$X\times {\mathbb P}$
\begin{equation}\label{e7}
\begin{matrix}
&& 0 && 0\\
&& \Big\downarrow && \Big\downarrow\\
&& p^*(F(-x)) &= &p^*(F(-x))\\
&& \Big\downarrow && \Big\downarrow\\
0&\longrightarrow & {\mathcal E} &\longrightarrow & p^*F
&\longrightarrow & \iota_*{\mathcal O}_{\mathbb P}(1)
&\longrightarrow & 0\\
&& \Big\downarrow && \Big\downarrow && \Vert\\
0&\longrightarrow & \iota_*(\Omega^1_{\mathbb P}(1)) &\longrightarrow & F_x \otimes_{\mathbb C}
\iota_*{\mathcal O}_{\mathbb P} &\longrightarrow & \iota_*{\mathcal
O}_{\mathbb P}(1)
&\longrightarrow & 0\\
&& \Big\downarrow && \Big\downarrow\\
&& 0 && 0\\
\end{matrix}
\end{equation}
(see \cite[p. 565, (5)]{BBGN}). 

\begin{lemma}\label{lemnew1}
There exists an exact sequence of vector bundles
\begin{equation}\label{e8}
0\, \longrightarrow\, {\mathcal O}_{\mathbb P}(1)\, \longrightarrow\,{\mathcal E}_x\, \longrightarrow\,
\Omega^1_{\mathbb P}(1)\, \longrightarrow\, 0
\end{equation}
on $\mathbb P$.
\end{lemma}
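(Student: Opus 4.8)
The plan is to extract the exact sequence on $\mathbb{P}$ directly from the big commutative diagram \eqref{e7} by restricting to the fibre $\{x\}\times\mathbb{P}$, which we identify with $\mathbb{P}$ itself. The bundle $\mathcal{E}$ sits in the middle column and middle row of \eqref{e7}, and $\mathcal{E}_x$ is precisely $\iota^*\mathcal{E}$, the restriction of $\mathcal{E}$ to $\{x\}\times\mathbb{P}$. So the task is to compute this restriction and show it fits into \eqref{e8}.

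First I would read off from the left-hand column of \eqref{e7} the short exact sequence
\begin{equation*}
0\longrightarrow p^*(F(-x))\longrightarrow \mathcal{E}\longrightarrow \iota_*(\Omega^1_{\mathbb{P}}(1))\longrightarrow 0,
\end{equation*}
obtained by a diagram chase (the snake lemma or the $3\times 3$ structure) applied to the two middle columns and the bottom row. The idea is that $\mathcal{E}$ maps to $p^*F$ with the further quotient to $F_x\otimes\iota_*\mathcal{O}_{\mathbb{P}}$ landing in $\iota_*\mathcal{O}_{\mathbb{P}}(1)$, and the kernel of the composite is exactly the extension of $\iota_*(\Omega^1_{\mathbb{P}}(1))$ by $p^*(F(-x))$. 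Then I would apply $\iota^*$ to this sequence. The key computation is that $\iota^*p^*(F(-x))$ is the trivial bundle $F_x(-x)_x\otimes\mathcal{O}_{\mathbb{P}}$ up to twist — more precisely, since $p\circ\iota$ is the constant map to $x$, the pullback $\iota^*p^*(F(-x))$ is the trivial vector bundle with fibre $F_x\otimes (\mathcal{O}_X(-x))_x$, a trivial bundle of rank $n$ on $\mathbb{P}$. Meanwhile $\iota^*\iota_*(\Omega^1_{\mathbb{P}}(1))$ must be analysed using the normal bundle of $\mathbb{P}=\{x\}\times\mathbb{P}$ in $X\times\mathbb{P}$, which is trivial of rank $1$.

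The genuinely delicate step is that restriction along $\iota$ is not exact: applying $\iota^*$ to a short exact sequence introduces a $\mathcal{T}or_1$ term coming from the divisor $\{x\}\times\mathbb{P}$. The hard part will be tracking these torsion contributions and showing that, after the dust settles, the surviving sequence is exactly \eqref{e8} with $\mathcal{O}_{\mathbb{P}}(1)$ as sub and $\Omega^1_{\mathbb{P}}(1)$ as quotient. I expect the cleanest route is to observe that $\mathcal{E}$ is locally free on $X\times\mathbb{P}$ (being the kernel of a surjection of locally free sheaves onto the line bundle $\iota_*\mathcal{O}_{\mathbb{P}}(1)$, which is where care is needed since $\iota_*\mathcal{O}_{\mathbb{P}}(1)$ is a torsion sheaf), and then to use the horizontal middle row of \eqref{e7} restricted to $\mathbb{P}$. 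The middle row gives
\begin{equation*}
0\longrightarrow \mathcal{E}\longrightarrow p^*F\longrightarrow \iota_*\mathcal{O}_{\mathbb{P}}(1)\longrightarrow 0,
\end{equation*}
and restricting this to $\mathbb{P}$ produces, together with the $\mathcal{T}or_1$ of $\iota_*\mathcal{O}_{\mathbb{P}}(1)$ against $\mathcal{O}_{\mathbb{P}}$, an interplay yielding both the sub line bundle $\mathcal{O}_{\mathbb{P}}(1)$ (the contribution of the torsion sheaf) and the trivial-type quotient that, combined with the $\Omega^1_{\mathbb{P}}(1)$ from the columns, assembles into \eqref{e8}.

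Finally I would verify the result by a rank and determinant check: the sequence \eqref{e8} forces $\mathcal{E}_x$ to have rank $n$ and determinant $\mathcal{O}_{\mathbb{P}}(1)\otimes\det\Omega^1_{\mathbb{P}}(1)$, and I would confirm this matches the determinant of $\mathcal{E}_x$ computed independently from \eqref{e7}, since on $\mathbb{P}^{n-1}$ we have $\det\Omega^1_{\mathbb{P}} = \mathcal{O}_{\mathbb{P}}(-n)$. This consistency check both guards against sign or twist errors in the torsion bookkeeping and confirms that no spurious torsion survives, so that $\mathcal{E}_x$ is genuinely the locally free extension claimed in \eqref{e8}.
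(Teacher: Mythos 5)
Your proposal is correct, and your preferred (``cleanest'') route is a genuine, if close, variant of the paper's own argument. The paper restricts the \emph{left-hand column} of \eqref{e7}, namely $0\to p^*(F(-x))\to\mathcal{E}\to\iota_*(\Omega^1_{\mathbb P}(1))\to 0$, to the divisor $\{x\}\times{\mathbb P}$: since that divisor has trivial normal bundle, the $\mathcal{T}or_1$ correction term is again $\Omega^1_{\mathbb P}(1)$, giving the four-term sequence $0\to\Omega^1_{\mathbb P}(1)\to F_x\otimes_{\mathbb C}\mathcal{O}_{\mathbb P}\to\mathcal{E}_x\to\Omega^1_{\mathbb P}(1)\to 0$; splitting this at the middle produces a line bundle $\mathcal{K}$ (kernel of a surjection of vector bundles), and a degree count using $\deg(\Omega^1_{\mathbb P}(1))=-1$ gives $\deg\mathcal{K}=1$, hence $\mathcal{K}\cong\mathcal{O}_{\mathbb P}(1)$ and \eqref{e8}. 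Your first sketch is exactly this, stopped short of identifying $\mathcal{K}$; your second, preferred route instead restricts the \emph{middle row} $0\to\mathcal{E}\to p^*F\to\iota_*\mathcal{O}_{\mathbb P}(1)\to 0$, where the $\mathcal{T}or_1$ term is $\mathcal{O}_{\mathbb P}(1)\otimes N^\vee\cong\mathcal{O}_{\mathbb P}(1)$ ($N$ the trivial normal bundle), yielding $0\to\mathcal{O}_{\mathbb P}(1)\to\mathcal{E}_x\to F_x\otimes_{\mathbb C}\mathcal{O}_{\mathbb P}\to\mathcal{O}_{\mathbb P}(1)\to 0$. What this buys: the sub-line-bundle $\mathcal{O}_{\mathbb P}(1)$ falls out of the Tor computation directly, with no appeal to degrees and $\mathrm{Pic}({\mathbb P})\cong{\mathbb Z}$. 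What it costs, and where your write-up is vaguest (the word ``assembles''): you must identify the image of $\mathcal{E}_x\to F_x\otimes_{\mathbb C}\mathcal{O}_{\mathbb P}$, i.e.\ the kernel of the restricted surjection $F_x\otimes_{\mathbb C}\mathcal{O}_{\mathbb P}\to\mathcal{O}_{\mathbb P}(1)$, with $\Omega^1_{\mathbb P}(1)$; this holds because that surjection is the Euler map, by commutativity of the bottom right square of \eqref{e7}, so the identification comes from the \emph{bottom row} (not, as you say, ``from the columns''). This is a real step but not a gap, since it is read off the diagram. Both routes rest on the two facts you state correctly: restriction to a divisor is only right exact, with $\mathcal{T}or_1$ as the correction, and $\{x\}\times{\mathbb P}$ has trivial normal bundle; your closing rank-and-determinant check is a sound consistency test but is not needed once the quotient is identified.
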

\begin{proof}
Pulling back the left hand column of (\ref{e7}) by $\iota$, we get
$$0\longrightarrow\Omega^1_{\mathbb P}(1)\longrightarrow F_x\otimes_{\mathbb C}\mathcal{O}_{\mathbb P}
\longrightarrow\mathcal{E}_x\longrightarrow\Omega^1_{\mathbb P}(1)\longrightarrow0.$$
This splits into two short exact sequences
\begin{equation}\label{eqnew11}0\longrightarrow\Omega^1_{\mathbb P}(1)\longrightarrow F_x\otimes_{\mathbb C}\mathcal{O}_{\mathbb P}
\longrightarrow\mathcal{K}\longrightarrow0
\end{equation}
and
\begin{equation}\label{eqnew12}0\longrightarrow\mathcal{K}
\longrightarrow\mathcal{E}_x\longrightarrow\Omega^1_{\mathbb P}(1)\longrightarrow0,
\end{equation}
where $\mathcal{K}$ is a line bundle on $\mathbb{P}$. Since $\deg(\Omega^1_{\mathbb P}(1))=-1$, it follows from \eqref{eqnew11} that $\deg\mathcal{K}=1$. Now \eqref{eqnew12} gives the result.
\end{proof}

\begin{lemma}\label{lem1}
Let $W\, \subset\, {\mathcal E}_x$ be a nonzero coherent subsheaf
of the vector bundle ${\mathcal E}_x$ in \eqref{e8} such that
\begin{itemize}
\item the quotient ${\mathcal E}_x/W$ is torsion-free, and
\item $\frac{{\rm deg}(W)}{{\rm rk}(W)}\, \geq\,
\frac{{\rm deg}({\mathcal E}_x)}{{\rm rk}({\mathcal E}_x)}$.
\end{itemize}
Then $W$ contains the line subbundle ${\mathcal O}_{\mathbb P}(1)$ 
of ${\mathcal E}_x$ in
\eqref{e8}.
\end{lemma}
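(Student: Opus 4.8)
The plan is to read off the numerics from the extension \eqref{e8} and then combine them with the semistability of $\Omega^1_{\mathbb P}(1)$. First I would record the basic data: on $\mathbb P=P(F^*_x)\cong\mathbb P^{n-1}$, polarized by $\mathcal O_{\mathbb P}(1)$, the sub-line-bundle $\mathcal O_{\mathbb P}(1)$ has degree $1$, while the quotient $\Omega^1_{\mathbb P}(1)$ has rank $n-1$ and degree $-1$ (as computed in Lemma \ref{lemnew1}). Hence $\mathcal E_x$ has rank $n$ and degree $0$, so $\mu(\mathcal E_x)=0$ and $\mu(\Omega^1_{\mathbb P}(1))=-\frac1{n-1}<0$. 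The second ingredient I would invoke is that $\Omega^1_{\mathbb P}(1)$ is semistable (indeed stable) with respect to this polarization; this is classical for the cotangent bundle of projective space and is unaffected by the twist. The whole point will be that a subsheaf of slope $\ge0$ cannot interact nontrivially with a quotient of strictly negative slope.

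Next I would look at the composite $\psi\colon W\hookrightarrow\mathcal E_x\twoheadrightarrow\Omega^1_{\mathbb P}(1)$ coming from \eqref{e8}, and set $W':=\ker\psi=W\cap\mathcal O_{\mathbb P}(1)$, with image $W''\subseteq\Omega^1_{\mathbb P}(1)$, so that $0\to W'\to W\to W''\to0$ and therefore $\deg W=\deg W'+\deg W''$. Since $\mathbb P$ is smooth and integral, $W\ne0$ has positive rank, and $W'$, being a subsheaf of the line bundle $\mathcal O_{\mathbb P}(1)$, has rank $0$ or $1$. If $\psi=0$ then $W\subseteq\mathcal O_{\mathbb P}(1)$ has rank $1$; here the hypothesis that $\mathcal E_x/W$ is torsion-free is exactly what I would use, for $\mathcal O_{\mathbb P}(1)/W$ would be a torsion subsheaf of $\mathcal E_x/W$, so it must vanish and $W=\mathcal O_{\mathbb P}(1)$, giving the conclusion immediately.

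So I would then assume $\psi\ne0$, whence $W''\ne0$. The key estimate is that every nonzero subsheaf of $\Omega^1_{\mathbb P}(1)$ has negative slope: passing to the saturation $\overline{W}''$ and using semistability, $\deg W''\le\deg\overline{W}''\le-\frac{{\rm rk}\,W''}{n-1}<0$. If ${\rm rk}\,W'=0$ then $\psi$ is injective, so $W\cong W''$ and $\mu(W)=\mu(W'')<0$, contradicting $\mu(W)\ge0$; hence ${\rm rk}\,W'=1$. If moreover $W'\subsetneq\mathcal O_{\mathbb P}(1)$, then $\deg W'\le0$ and $\deg W=\deg W'+\deg W''\le0+\deg\overline{W}''<0$, again contradicting $\mu(W)\ge0$. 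Therefore $W'=\mathcal O_{\mathbb P}(1)$, i.e. $\mathcal O_{\mathbb P}(1)\subseteq W$, as required.

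The hard part is really just the slope bound on subsheaves of $\Omega^1_{\mathbb P}(1)$: everything hinges on $\mu(\Omega^1_{\mathbb P}(1))<0$ together with its semistability, which is what forbids a subsheaf of $\mathcal E_x$ of slope $\ge0$ from projecting nontrivially to the quotient unless its intersection with $\mathcal O_{\mathbb P}(1)$ already fills out $\mathcal O_{\mathbb P}(1)$. Once that input is granted, the remaining case analysis on ${\rm rk}(W\cap\mathcal O_{\mathbb P}(1))\in\{0,1\}$, and the use of torsion-freeness of $\mathcal E_x/W$ in the degenerate case $\psi=0$, are routine degree bookkeeping.
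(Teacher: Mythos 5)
Your reduction to the cases $\mathrm{rk}(W\cap\mathcal O_{\mathbb P}(1))\in\{0,1\}$ and your slope bound for subsheaves of $\Omega^1_{\mathbb P}(1)$ are correct, but the final subcase contains a genuine error: from $W'\subsetneq\mathcal O_{\mathbb P}(1)$ you infer $\deg W'\le 0$. That inference is valid only when $\mathbb P$ is a curve, i.e.\ when $n=2$. For $n\ge 3$ a \emph{proper} rank-$1$ subsheaf of $\mathcal O_{\mathbb P}(1)$ can have degree $1$: take $W'=I_Z\otimes\mathcal O_{\mathbb P}(1)$ with $Z$ of codimension $\ge 2$ (e.g.\ a point); then $\mathcal O_{\mathbb P}(1)/W'$ is supported in codimension $\ge 2$, so $c_1(W')=c_1(\mathcal O_{\mathbb P}(1))$ and $\deg W'=1$. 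Your bookkeeping then yields only $\deg W=\deg W'+\deg W''\le 1+(-1)=0$, which is perfectly compatible with the hypothesis $\deg W\ge 0$, so no contradiction is obtained. The proof therefore breaks down exactly in the case it must handle, since the conclusion of the lemma fails precisely when $W'\ne\mathcal O_{\mathbb P}(1)$; this case cannot be settled by degree estimates alone.

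The repair is easy and uses the torsion-freeness argument you already deployed in the case $\psi=0$; once $\mathrm{rk}\,W'=1$ no degree count is needed. Since $W/W'\cong W''$ embeds in the bundle $\Omega^1_{\mathbb P}(1)$, it is torsion-free; hence $\mathcal E_x/W'$, being an extension of the torsion-free sheaf $\mathcal E_x/W$ by $W''$, is torsion-free. The torsion sheaf $\mathcal O_{\mathbb P}(1)/W'$ injects into $\mathcal E_x/W'$ and so vanishes, i.e.\ $W'=\mathcal O_{\mathbb P}(1)\subseteq W$. This is in essence how the paper argues, except that instead of $\ker\psi$ it works with the first step $W_1$ of the Harder--Narasimhan filtration of $W$: semistability of $W_1$ with $\mu(W_1)\ge\mu(W)\ge 0$, together with stability of $\Omega^1_{\mathbb P}(1)$ (of slope $\frac{1}{1-n}<0$), gives $\mathrm{Hom}\bigl(W_1,\Omega^1_{\mathbb P}(1)\bigr)=0$, so $W_1\subseteq\mathcal O_{\mathbb P}(1)$; and since $W_1$ is automatically saturated in $W$, hence (by your hypothesis) in $\mathcal E_x$, the torsion quotient $\mathcal O_{\mathbb P}(1)/W_1$ must vanish, giving $W_1=\mathcal O_{\mathbb P}(1)\subseteq W$.
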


\begin{proof}
Note that, by \eqref{e8}, $\deg(\mathcal{E}_x)=0$. Let
$
W_1\, \subseteq\, W
$
be the first term of the Harder--Narasimhan filtration of
the subsheaf $W$. So $W_1$ is semistable, and
$$
\frac{\text{deg}(W_1)}{\text{rk}(W_1)}\, \geq\,
\frac{\text{deg}(W)}{\text{rk}(W)}\, .
$$
Since ${\rm deg}(W)/{\rm rk}(W)\, \geq\, {\rm deg}({\mathcal E}_x)/{\rm
rk}({\mathcal E}_x)$, we have
\begin{equation}\label{e9}
\frac{\text{deg}(W_1)}{\text{rk}(W_1)} \, \geq \, \frac{{\rm
deg}({\mathcal E}_x)}{{\rm rk}({\mathcal E}_x)}\, =\, 0\, >\,
\frac{1}{1-n}\, =\, \frac{{\rm deg}(\Omega^1_{\mathbb P}(1))}{{\rm rk}(\Omega^1_{\mathbb P}(1))}\, .
\end{equation}

The vector bundle $\Omega^1_{\mathbb P}(1)$ is stable (see \cite[Chapter II 
Theorem 1.3.2]{OSS}). Therefore, from \eqref{e9} it follows that there is no
nonzero homomorphism from $W_1$ to $\Omega^1_{\mathbb P}(1)$. Consequently, $W_1$ is contained in
the line subbundle ${\mathcal O}_{\mathbb P}(1)$ of ${\mathcal E}_x$ in \eqref{e8}. 
Since $\mathcal{E}_x/W_1$ is torsion-free, this
completes the proof of the lemma.
\end{proof}

We shall apply this construction in the case where $F$ is 
$(0,1)$-stable (see \cite[Definition 8.1]{NR0} or \cite[Definition 5.1]{NR}). 
For convenience, we recall this definition: a vector bundle $F$ on 
$X$ is $(0,1)$-{\it stable} if, for any proper subbundle $F'$ of $F$,
$$\frac{\deg F'}{\text{rk}F'}<\frac{\deg F-1}{\text{rk}F}.$$ 
We have (\cite[Lemma 1]{BBGN} 
or \cite[Lemma 5.5]{NR})

\begin{lemma}\label{lemnew3}
Let $F$ be a $(0,1)$-stable vector bundle of rank $n$ and 
determinant $\xi(x)$. Then the vector bundle $\mathcal{E}$ in 
\eqref{e7} is a family of stable vector bundles of rank $n$ and determinant $\xi$ on X.\qed
\end{lemma}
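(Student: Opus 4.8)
The plan is to reduce the assertion to the stability of each fibre $\mathcal{E}|_{X\times\{z\}}$, $z\in\mathbb{P}$, and to read that stability off directly from the $(0,1)$-stability of $F$. First I would record that $\mathcal{E}$, being the kernel of the surjection $p^*F\to\iota_*\mathcal{O}_{\mathbb{P}}(1)$ in the middle row of \eqref{e7}, is an elementary (Hecke) modification of $p^*F$ along the divisor $D:=\{x\}\times\mathbb{P}$. Since the induced map $p^*F|_D=F_x\otimes\mathcal{O}_{\mathbb{P}}\to\mathcal{O}_{\mathbb{P}}(1)$ is the tautological surjection of bundles on $D$, its kernel is locally free, and the standard elementary-modification argument shows that $\mathcal{E}$ is locally free on $X\times\mathbb{P}$, hence flat over $\mathbb{P}$ with all fibres of rank $n$. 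It therefore remains to identify the fibres $\mathcal{E}_z:=\mathcal{E}|_{X\times\{z\}}$ and prove their stability.

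Next I would identify $\mathcal{E}_z$ concretely. The inclusion $\mathcal{E}\hookrightarrow p^*F$ is an isomorphism off $D$, so its restriction $\mathcal{E}_z\to F$ is generically an isomorphism and, both sheaves being torsion-free on the curve $X$, is injective with torsion cokernel supported at $x$. Restricting the relation $c_1(\mathcal{E})=p^*c_1(F)-[D]$ to $X\times\{z\}$ gives $\deg\mathcal{E}_z=\deg F-1=d$ and $\det\mathcal{E}_z=\det F\otimes\mathcal{O}_X(-x)=\xi$; in particular the cokernel has length $1$, so we obtain a short exact sequence $0\to\mathcal{E}_z\to F\to\mathbb{C}_x\to0$ with $\mathbb{C}_x$ a length-one sheaf at $x$, exhibiting $\mathcal{E}_z$ as a single Hecke modification of $F$ at $x$. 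The one point needing care is that restricting the middle row of \eqref{e7} to $X\times\{z\}$ introduces no $\mathrm{Tor}_1$: the local equation of $D$ restricts to that of $x$ on $X\times\{z\}\cong X$, which is a nonzerodivisor, so the restricted sequence stays exact.

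Finally I would show that each $\mathcal{E}_z$ is stable of slope $\mu(\mathcal{E}_z)=d/n$. Let $G\subset\mathcal{E}_z$ be a nonzero subsheaf of rank $r$ with $1\le r\le n-1$, and let $\overline{G}$ denote its saturation inside $F$ under $\mathcal{E}_z\subset F$. Then $\overline{G}$ is a proper subbundle of $F$ (its rank is $r<n$) and $\deg G\le\deg\overline{G}$, since passing to the saturation does not decrease degree. Applying the definition of $(0,1)$-stability to $\overline{G}$ yields
$$\frac{\deg G}{r}\ \le\ \frac{\deg\overline{G}}{r}\ <\ \frac{\deg F-1}{n}\ =\ \frac{d}{n}\ =\ \mu(\mathcal{E}_z),$$
so every proper subsheaf of $\mathcal{E}_z$ has slope strictly less than $\mu(\mathcal{E}_z)$; hence $\mathcal{E}_z$ is stable. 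This is the crux of the matter: $(0,1)$-stability is precisely the numerical condition ensuring that removing one unit of degree at a single point preserves stability.

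I expect the genuinely delicate step to be the flatness and torsion bookkeeping of the second paragraph, namely confirming that for \emph{every} $z$ the fibre really is a length-one Hecke modification with the asserted determinant, rather than the slope estimate itself, which is immediate once the modification has been identified.
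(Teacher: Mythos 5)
Your proposal is correct, and it is essentially the same argument as the one behind the paper's proof: the paper offers no proof of its own but cites \cite[Lemma 1]{BBGN} and \cite[Lemma 5.5]{NR}, where the result is established exactly as you do it, by identifying each fibre $\mathcal{E}|_{X\times\{z\}}$ as a Hecke modification $0\to\mathcal{E}_z\to F\to\mathbb{C}_x\to 0$ and deducing its stability by saturating subsheaves inside $F$ and invoking the $(0,1)$-stability inequality. Your attention to local freeness of $\mathcal{E}$, flatness over $\mathbb{P}$, and the $\mathrm{Tor}_1$-vanishing when restricting the middle row of \eqref{e7} fills in precisely the bookkeeping that those references handle.
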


Let $\mathcal{M}_{\xi(x)}$ denote the moduli space of stable vector bundles on $X$ of rank $n$ and determinant $\xi(x)$. If $F\in\mathcal{M}_{\xi(x)}$ is $(0,1)$-stable, Lemma \ref{lemnew3} gives us a morphism
$$\phi_F:P(F_x^*)\to\mathcal{M}_\xi.$$
We have 
\begin{lemma}\label{lemnew4}
For any $(0,1)$-stable $F\in\mathcal{M}_{\xi(x)}$, the morphism 
$\phi_F$ is an embedding. Moreover the locus of $(0,1)$-stable vector bundles $F\in\mathcal{M}_{\xi(x)}$ is a non-empty Zariski-open subset.
\end{lemma}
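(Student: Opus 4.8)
```latex
The plan is to prove the two assertions separately, dealing first with the
openness and non-emptiness of the $(0,1)$-stable locus and then with the
embedding statement. For the first part, I would argue that the
$(0,1)$-stability condition is an open condition on $\mathcal{M}_{\xi(x)}$.
Indeed, $(0,1)$-stability of $F$ is equivalent to the ordinary stability of
$F(-x/\text{rk})$-type twists, or more concretely one can view it as a bounded
family of destabilizing-type inequalities: for each numerical type
$(\text{rk}F',\deg F')$ of a potential subbundle violating the inequality, the
locus where such a subbundle exists is closed (by semicontinuity applied to the
relevant Quot-schemes), and only finitely many numerical types are relevant
because the slopes of subbundles of a stable bundle are bounded. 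Hence the
$(0,1)$-stable locus is Zariski-open. Non-emptiness is a known result
(\cite{NR0}, \cite[Lemma 5.5]{NR}); I would simply cite it, noting that
$(0,1)$-stable bundles exist for the rank and degree under consideration,
possibly remarking that the condition $g\ge3$ guarantees enough room.

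For the embedding statement, the key is to use Proposition \ref{propnew1}
together with the structure of the family $\mathcal E$ on $X\times P(F_x^*)$.
Since $\mathcal E$ is a family of stable bundles of determinant $\xi$ (Lemma
\ref{lemnew3}), the morphism $\phi_F:P(F_x^*)\to\mathcal{M}_\xi$ is well-defined,
and I must show it is injective with injective differential. First I would
establish \emph{injectivity}: if $\phi_F(z_1)=\phi_F(z_2)$, then
$\mathcal E_{z_1}\cong\mathcal E_{z_2}$ as bundles on $X$. By the construction
in \eqref{e7}, the point $z\in P(F_x^*)$ is recovered from $\mathcal E_z$ as the
hyperplane in $F_x$ determined by the inclusion $\mathcal E_z\hookrightarrow F$
(equivalently by the quotient line $\iota_*\mathcal O_{\mathbb P}(1)$ supported
at $x$). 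Concretely, $F$ and the distinguished quotient arise canonically from
$\mathcal E_z$ via the Hecke correspondence, so an isomorphism
$\mathcal E_{z_1}\cong\mathcal E_{z_2}$ forces the two hyperplanes to coincide,
giving $z_1=z_2$. The main subtlety here is that this identification must be
canonical: I would use $(0,1)$-stability of $F$ to ensure that the inclusion
$\mathcal E_z\hookrightarrow F$ is the unique (up to scalar) such inclusion with
a given cokernel, so that $F$ is uniquely reconstructed from $\mathcal E_z$.

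Next I would prove that $d\phi_F$ is injective at every point. The differential
is a map $T_zP(F_x^*)\to T_{\phi_F(z)}\mathcal{M}_\xi\cong
H^1(X,\text{End}(\mathcal E_z))_0$ (trace-free part), and it can be computed
from the family $\mathcal E$ via the Kodaira--Spencer map. Using the exact
sequence \eqref{e8} for $\mathcal E_x$ restricted to $\mathbb P$ and the
stability of $\Omega^1_{\mathbb P}(1)$, I would identify the image of the
differential and show it is nonzero on each tangent direction; the essential
input is that deformations of $z$ produce genuinely distinct extension classes,
which again follows from the stability and the Hecke description. I expect the
\textbf{main obstacle} to be the injectivity of the differential: verifying
that the Kodaira--Spencer map has no kernel requires a careful cohomological
computation relating $T_zP(F_x^*)=\text{Hom}(\text{line},F_x/\text{line})$ to
$H^1(\text{End}\,\mathcal E_z)$ and checking this composite is injective. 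Here
I would lean on the cohomology of the sequences in \eqref{e7} and the fact,
used already in Lemma \ref{lem1}, that $\Omega^1_{\mathbb P}(1)$ is stable of
slope strictly below $0$, which prevents any unwanted splitting or vanishing
that would collapse tangent directions. Once both injectivity and injectivity
of the differential are established, $\phi_F$ is an injective immersion of a
complete variety, hence a closed embedding, completing the proof.
```
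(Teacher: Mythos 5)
Your skeleton (injectivity of $\phi_F$ $+$ injectivity of the differential $+$ properness of $\mathbb{P}(F_x^*)$ $\Rightarrow$ closed embedding; openness of the $(0,1)$-stable locus by boundedness/semicontinuity; non-emptiness by citation) is indeed the strategy behind the results the paper invokes — note that the paper's own proof of Lemma \ref{lemnew4} is a pure citation: \cite[Lemma 3]{BBGN} or \cite[Lemma 5.9]{NR} for the embedding, and \cite[Lemma 2]{BBGN} for openness and non-emptiness. Measured as a standalone proof, however, your proposal has a genuine gap exactly where $(0,1)$-stability has to do real work. Your injectivity step rests on the assertion that the inclusion $\mathcal{E}_z\hookrightarrow F$ (writing $\mathcal{E}_z:=\mathcal{E}|_{X\times\{z\}}$) is ``the unique such inclusion up to scalar'', i.e.\ that $\dim\mathrm{Hom}(\mathcal{E}_z,F)=1$. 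What $(0,1)$-stability gives cheaply is only this: every nonzero $f:\mathcal{E}_z\to F$ is injective with cokernel $\mathbb{C}_x$ (if ${\rm rk}\,f<n$, the saturation of its image would be a proper subbundle of $F$ of slope $>d/n=\frac{\deg F-1}{n}$, contradicting $(0,1)$-stability; then $\det f$ is a nonzero section of $\mathcal{O}_X(x)$, which pins the cokernel at $x$). But two linearly independent such homomorphisms have distinct images (equal images force proportionality via $\mathrm{Aut}(\mathcal{E}_z)=\mathbb{C}^*$), and distinct images are precisely two distinct points $z_1\neq z_2$ with $\mathcal{E}_{z_1}\cong\mathcal{E}_{z_2}$. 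So $\dim\mathrm{Hom}(\mathcal{E}_z,F)=1$ is \emph{equivalent} to the injectivity you are trying to prove; asserting that $(0,1)$-stability ``ensures'' it is circular. The long exact sequence from applying $\mathrm{Hom}(-,F)$ to $0\to\mathcal{E}_z\to F\to\mathbb{C}_x\to 0$ only yields $\dim\mathrm{Hom}(\mathcal{E}_z,F)\le 1+n$, and cutting this down to $1$ is the actual content of the Narasimhan--Ramanan lemma; no argument for it appears in your proposal.

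The same applies to the differential: you correctly flag the injectivity of the Kodaira--Spencer map $T_z\mathbb{P}(F_x^*)\to H^1(X,\mathrm{ad}\,\mathcal{E}_z)$ as the main obstacle, but you only describe what would need to be checked; the stability of $\Omega^1_{\mathbb{P}}(1)$, which enters Lemma \ref{lem1} for a different purpose, does not by itself produce the computation. The remaining ingredients are fine: properness does upgrade an injective immersion to a closed embedding, and openness of the $(0,1)$-stable locus does follow from the standard Quot-scheme argument. For non-emptiness, though, your reference \cite[Lemma 5.5]{NR} is the wrong one (that is the stability of Hecke modifications, quoted in this paper as Lemma \ref{lemnew3}); the relevant sources are \cite[Lemma 2]{BBGN} or \cite{NR0}, and the paper is careful about a point your sketch glosses over: the last line of the proof of \cite[Lemma 2]{BBGN} uses coprimality of $n$ and $d$, and it is the hypothesis $g\ge 3$ that allows one to dispense with it here.
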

\begin{proof}
For the first part, see \cite[Lemma 3]{BBGN} or \cite[Lemma 5.9]{NR}. For the second part, see \cite[Lemma 2]{BBGN} and note that, in the last line of the proof, we do not require $n$ and $d$ to be coprime since $g\ge3$.
\end{proof}

The second construction is the reverse of the one just considered.

For any vector bundle $E$ on $X$ of rank $n$ and determinant
$\xi$, take a line $\ell\, \subset\, E_x$ in the fibre of $E$ over
$x$. Let $F$ be the vector bundle over $X$ that fits in the
following short exact sequence of sheaves
\begin{equation}\label{e11}
0\, \longrightarrow\, F(-x)
\, \longrightarrow\, E\, \longrightarrow\,
E_x/\ell\, \longrightarrow\, 0\, .
\end{equation}

Consider the subset $H_x$ of $\mathcal{PU}_x$ consisting of pairs $(E,\ell)$ for which the bundle $F$ defined by \eqref{e11} is $(0,1)$-stable. We have a diagram
\begin{equation}\begin{matrix}\label{enew4}
H_x&\stackrel{p}{\longrightarrow}&\mathcal{M}_\xi\\q\Big\downarrow\\
\mathcal{M}_{\xi(x)}
\end{matrix}\end{equation}
given by $p(E,\ell)=E$ and $q(E,\ell)=F$. The map $p$ is the restriction of the projection 
$\mathcal{PU}_x\to\mathcal{M}_\xi$ and its image is the set of bundles $E\in\mathcal{M}_\xi$ 
for which there exists a line $\ell\in E_x$ such that 
the vector bundle $F$ in \eqref{e11} is $(0,1)$-stable. 

\begin{lemma}\label{lemnew5}\ 
\begin{itemize}
\item[(i)] $H_x$ is non-empty and Zariski-open in $\mathcal{PU}_x$ and $q$ is a morphism;
\item[(ii)] $p(H_x)$ is non-empty and Zariski-open in $\mathcal{M}_\xi$.\end{itemize}\end{lemma}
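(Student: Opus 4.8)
The plan is to analyze the two parts of Lemma~\ref{lemnew5} somewhat separately, since part~(ii) will follow from part~(i) together with results already established. I would first observe that the $(0,1)$-stability condition on $F$ is an \emph{open} condition: in the Hecke-type family parametrizing the bundles $F$ arising from pairs $(E,\ell)$ via \eqref{e11}, the locus where $F$ fails to be a vector bundle, or fails to be $(0,1)$-stable, is closed. This is the essential reason $H_x$ is Zariski-open in $\mathcal{PU}_x$. To make this precise I would want a universal version of the elementary modification \eqref{e11} over the total space $\mathcal{PU}_x$. Here the subtlety is that $\mathcal{U}$ does not exist globally, so one cannot directly write down a universal $E$; instead I would work upstairs on the Quot-scheme data $R$ (or on $P(\mathcal{E}_R)$) where the universal vector bundle $\mathcal{E}_R$ \emph{does} exist, perform the construction $\text{PGL}(M)$-equivariantly, and then descend. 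Concretely, over $P(\mathcal{E}_R)$ there is a tautological line subbundle of the pullback of $(\mathcal{E}_R)_x$, giving a universal line $\ell$, and hence a universal family $\widetilde F$ of the bundles in \eqref{e11}; the $(0,1)$-stable locus is open and $\text{PGL}(M)$-invariant, so it descends to the desired open subset $H_x\subseteq\mathcal{PU}_x$, and the classifying map to $\mathcal{M}_{\xi(x)}$ descends to the morphism $q$.

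For non-emptiness in part~(i), I would invoke the second half of Lemma~\ref{lemnew4}, which says the $(0,1)$-stable locus in $\mathcal{M}_{\xi(x)}$ is non-empty and Zariski-open. Starting from any $(0,1)$-stable $F\in\mathcal{M}_{\xi(x)}$, the first construction of the section (the sequence \eqref{e7} and Lemma~\ref{lemnew3}) produces the family $\mathcal{E}$ over $P(F_x^*)$ of stable bundles with determinant $\xi$, together with the embedding $\phi_F$ of Lemma~\ref{lemnew4}. I would argue that the two constructions are mutually inverse: a point of $P(F_x^*)$ corresponds to a hyperplane in $F_x$, equivalently a line in $F_x^*$, and running \eqref{e7} produces a bundle $E$ together with a distinguished line $\ell\subset E_x$ whose associated Hecke modification \eqref{e11} recovers $F$ up to isomorphism. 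This exhibits a point $(E,\ell)\in H_x$ and shows $H_x\neq\emptyset$. Verifying that the modification \eqref{e11} applied to this $(E,\ell)$ returns precisely the $(0,1)$-stable $F$ is the step requiring the most care; it is a diagram-chase matching the two short exact sequences, and I expect it to be the main technical obstacle, though it is implicit in the duality between the two constructions recalled from \cite{BBGN} and \cite{NR}.

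For part~(ii), I would note that $p(H_x)$ is exactly the set of $E\in\mathcal{M}_\xi$ admitting some line $\ell\subset E_x$ with $F$ in \eqref{e11} being $(0,1)$-stable, as already recorded after diagram~\eqref{enew4}. Non-emptiness is immediate from part~(i). For openness, the cleanest route is to identify $p(H_x)$ with the image $\phi_F(P(F_x^*))$ as $F$ ranges over the $(0,1)$-stable locus, and then to use that $p$ is the restriction of the proper fibration $\mathcal{PU}_x\to\mathcal{M}_\xi$; since $H_x$ is open and the fibres of $p$ are the projective spaces $P(E_x)$, I would show that the complement of $p(H_x)$ is closed by a dimension or semicontinuity argument, or more directly by exhibiting $p(H_x)$ as the complement of the (closed) locus of $E$ for which \emph{every} line $\ell\subset E_x$ yields a non-$(0,1)$-stable $F$. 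The key inputs throughout are the existence of $\mathcal{E}_R$ upstairs (to make the universal modification), the openness of $(0,1)$-stability, and Lemma~\ref{lemnew4}; no new hard estimate is needed beyond the inverse-construction matching in part~(i).
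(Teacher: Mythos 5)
Your proposal is correct and takes essentially the same route as the paper: work $\text{PGL}(M)$-equivariantly on $P(\mathcal{E}_R)_x$, where the universal family $\mathcal{E}_R$ exists, so that the $(0,1)$-stable locus $H_x'$ is open and invariant and descends to $H_x$, with non-emptiness supplied by Lemmas \ref{lemnew3} and \ref{lemnew4} via the inverse Hecke construction \eqref{e7}, the morphism $q$ coming from the universal property of $\mathcal{M}_{\xi(x)}$, and (ii) following from (i) and the projection $\mathcal{PU}_x\to\mathcal{M}_\xi$. One small correction: the openness of $p(H_x)$ is because that projection is a projective fibration, hence flat and therefore an \emph{open} morphism (properness is irrelevant here, and your ``complement of the locus where every $\ell$ fails'' variant needs exactly this openness anyway), which is the one-line argument the paper uses.
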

\begin{proof}
(This is proved in the coprime case in \cite[p. 566]{BBGN} but the proof uses a Poincar\'e 
vector bundle, so we give full details here.)

(i) We return to the construction of $\mathcal{M}_\xi$ as a quotient\linebreak $\pi:R\to\mathcal{M}_\xi$. The bundle $P(\mathcal{E}_R)_x$ on $R$ parametrizes a family of pairs $(E,l)$, where $E\in\mathcal{M}_\xi$ and $\ell$ is a line in $E_x$, and hence parametrizes sequences \eqref{e11}; the subset $H_x'$ of $P(\mathcal{E}_R)_x$ for which $F$ is $(0,1)$-stable is therefore Zariski-open and is non-empty by Lemmas \ref{lemnew3} and \ref{lemnew4}.  The group $\text{PGL}(M)$ acts on $P(\mathcal{E}_R)_x$ with $H_x'$ as an invariant subset. We can now identify $\mathcal{PU}_x$ and $H_x$ with 
$P(\mathcal{E}_R)_x/\text{PGL}(M)$ and $H_x'/\text{PGL}(M)$. 
Moreover the map $H_x'\to\mathcal{M}_{\xi(x)}$ given by sending a 
point of $H_x'$ to the corresponding $F$ is a morphism by the universal 
property of $\mathcal{M}_{\xi(x)}$; hence $q$ is a morphism.

(ii) By (i), $p(H_x)$ is non-empty. It is also Zariski-open since 
$p$ is the restriction of the projection morphism 
$\mathcal{PU}_x\to{\mathcal M}_\xi$.\end{proof}

We are now ready to state and prove the first of our main theorems.

\begin{theorem}\label{thm1}
Let $X$ be a smooth projective algebraic curve of genus $g\ge3$, $n\ge2$ an integer and $\xi$ 
a line bundle on $X$ of degree $d$. Let ${\mathcal M}_\xi$ denote the moduli space of stable vector bundles on $X$ of rank $n$ and determinant $\xi$ and let $\mathcal{PU}$ be the projective Poincar\'e bundle on 
$X\times {\mathcal M}_\xi$. Then $\mathcal{PU}_x$ is stable for all $x\in X$.
\end{theorem}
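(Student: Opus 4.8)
The plan is to argue by contradiction, transferring the question of stability on $\mathcal{M}_\xi$, which is governed by the intersection-theoretic degree with respect to the generator $\Theta$ of $\text{Pic}(\mathcal{M}_\xi)\cong\mathbb{Z}$, to a purely fibrewise question on the projective spaces $\mathbb{P}=P(F_x^*)$ embedded in $\mathcal{M}_\xi$ by the Hecke maps $\phi_F$ of Lemma \ref{lemnew4}. Concretely, suppose $\mathcal{PU}_x$ is not stable. Then by Definition \ref{def1} there is a proper projective subbundle $P'$ of $\mathcal{PU}_x$ over a Zariski-open $Z'\subseteq\mathcal{M}_\xi$ whose complement has codimension $\ge2$, with associated normal bundle $N=q'_*(N_1)$ satisfying $\deg_\Theta N\le0$. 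I would fix a general $(0,1)$-stable $F\in\mathcal{M}_{\xi(x)}$; using Lemma \ref{lemnew5} and a dimension count on the family $H_x$ in (\ref{enew4}), for general such $F$ the embedded $\phi_F(\mathbb{P})$ meets $Z'$ in an open set whose complement in $\mathbb{P}$ has codimension $\ge2$.

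By Proposition \ref{propnew1} restricted to $\{x\}$, there is an isomorphism $\phi_F^*\mathcal{PU}_x\cong P(\mathcal{E}_x)$, where $\mathcal{E}_x$ is the rank-$n$, degree-$0$ bundle of Lemma \ref{lemnew1}. Hence (Remark \ref{remnew4}) the subbundle $P'$ pulls back to a vector subbundle $V'\subset\mathcal{E}_x$ over $\phi_F^{-1}(Z')$; let $W$ be its saturation in $\mathcal{E}_x$, a subsheaf on all of $\mathbb{P}$ with torsion-free quotient. The normal bundle pulls back to $\phi_F^*N\cong(V')^*\otimes(\mathcal{E}_x/V')$, whose first Chern class computes, using $\deg\mathcal{E}_x=0$ and $\text{rk}\,\mathcal{E}_x=n$, to $-n\,c_1(W)$ on $\mathbb{P}$ (extending across the codimension-$\ge2$ locus). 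The essential point is now that $\text{Pic}(\mathcal{M}_\xi)\cong\mathbb{Z}\Theta$: writing $c_1(N)=a\Theta$ and noting $\phi_F^*\Theta=\mathcal{O}_{\mathbb{P}}(k)$ with $k>0$ (as $\Theta$ is ample and $\phi_F$ an embedding), restriction along $\phi_F$ gives $ak=-n\deg_H W$, where $\deg_H$ is taken with respect to the hyperplane class. Since $\Theta^m>0$ for $m=\dim\mathcal{M}_\xi$, the sign of $\deg_\Theta N=a\,\Theta^m$ is opposite to that of $\deg_H W$; thus $\deg_\Theta N\le0$ forces $\deg_H W\ge0$, i.e. $\mu_H(W)\ge0=\mu_H(\mathcal{E}_x)$.

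At this point Lemma \ref{lem1} applies to $W$ and yields $\mathcal{O}_{\mathbb{P}}(1)\subseteq W$. Interpreting this at the level of projective bundles, $P(W)=\phi_F^*P'$ contains the section of $P(\mathcal{E}_x)$ cut out by the line subbundle $\mathcal{O}_{\mathbb{P}}(1)$ of (\ref{e8}); by the construction of the two inverse Hecke modifications (\ref{e7}) and (\ref{e11}) this section is exactly the canonical lift $z\mapsto(E_z,\ell_z)$ of $\phi_F$ into $\mathcal{PU}_x$, namely the distinguished Hecke points. Thus $P'$ contains every distinguished point of $\mathcal{PU}_x$ lying over $\phi_F(\mathbb{P})\cap Z'$. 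Letting $F$ range over all general $(0,1)$-stable bundles, these distinguished points sweep out precisely the set $H_x$ of (\ref{enew4}), which by Lemma \ref{lemnew5} is dense in $\mathcal{PU}_x$. Hence $P'$ contains a dense subset of $\mathcal{PU}_x|_{Z'}$, contradicting the fact that $P'$ is a \emph{proper} projective subbundle, its fibres being proper linear subspaces. This contradiction shows $\deg_\Theta N>0$ for every $P'$, so $\mathcal{PU}_x$ is stable.

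I expect the main obstacle to be the transfer of the degree inequality in the second paragraph: one must check carefully that the codimension-$\ge2$ conditions are preserved under $\phi_F$ for general $F$, so that the Chern class identities extend across $\mathbb{P}\setminus\phi_F^{-1}(Z')$ and $\mathcal{M}_\xi\setminus Z'$, and that $\text{Pic}(\mathcal{M}_\xi)\cong\mathbb{Z}$ genuinely lets a single fibre $\phi_F(\mathbb{P})$ detect the sign of the global degree. The other delicate point is the geometric identification of $\mathcal{O}_{\mathbb{P}}(1)\subset\mathcal{E}_x$ with the tautological Hecke section, which is what converts the numerical conclusion of Lemma \ref{lem1} into the density contradiction; once these two points are in place the argument closes.
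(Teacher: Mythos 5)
Your proposal is correct, and it runs on the same machinery as the paper's proof: the Hecke constructions \eqref{e7} and \eqref{e11}, the embeddings $\phi_F$ for general $(0,1)$-stable $F$, the dimension count on $H_x$ in \eqref{enew4} guaranteeing that $\phi_F^{-1}(Z')$ has complement of codimension $\ge2$, Lemma \ref{lem1}, and the transfer of degree signs via $\mathrm{Pic}(\mathcal{M}_\xi)\cong\mathbb{Z}$ together with the ampleness of $\phi_F^*\Theta$. The one genuine difference is the endgame. The paper argues directly: since $F$ is itself the Hecke transform of a pair $(E,\ell)$, it chooses $F$ general subject to the \emph{extra} condition that $\ell$ is not in the fibre of $P'$ over $E$; then $V'$ does not contain the subbundle $\mathcal{O}_{\mathbb{P}}(1)$ of \eqref{e8}, and the contrapositive of Lemma \ref{lem1} gives $\deg V'/\mathrm{rk}\,V'<0$, hence $\deg\phi_F^*N>0$, with no contradiction needed. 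You instead assume $\deg_\Theta N\le0$, let Lemma \ref{lem1} fire positively for \emph{every} admissible $F$, and then need the additional step that the distinguished Hecke sections sweep out a dense subset of $\mathcal{PU}_x$, so that the proper closed subvariety $P'$ of $\mathcal{PU}_x|_{Z'}$ cannot contain them all. That density step is sound --- the swept set contains $q^{-1}(U_0)\cap p^{-1}(Z')$, a non-empty open subset of the irreducible variety $H_x$, where $U_0$ is the dense open set of admissible $F$ --- but it is extra work that the paper's choice of $\ell$ short-circuits. Note that both arguments rest on identifying $\mathcal{O}_{\mathbb{P}}(1)\subset\mathcal{E}_x$ with the tautological section $z\mapsto(E_z,\ell_z)$: you make this explicit and verify it correctly, while the paper uses it implicitly when translating ``$\ell$ not in the fibre of $P'$ over $E$'' into ``$V'$ does not contain $\mathcal{O}_{\mathbb{P}}(1)$''. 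One small overstatement: the distinguished points do not sweep out ``precisely'' $H_x$, only the part of $H_x$ lying over $Z'$ whose $q$-image is general; but this is all your density conclusion actually requires.
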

\begin{proof} Let $P'$ be a projective subbundle of the restriction of $\mathcal{PU}_x$ to 
a Zariski-open subset $Z'$ of $\mathcal{M}_\xi$ with complement of codimension $\ge2$. 
By \eqref{enew4} and Lemma \ref{lemnew5}, $p^{-1}(Z')$ is a Zariski-open subset 
of $H_x$ whose complement $S$ has codimension $\ge2$, in other words
$$\dim S\le\dim\mathcal{M}_\xi+n-3.$$
Since the image of $q$ has dimension $\dim\mathcal{M}_\xi$, the intersection 
of $S$ with the general fibre of $q$ is a closed subset of dimension $\le n-3$. It follows from this, taking account of Lemma \ref{lemnew4}, that there exists a $(0,1)$-stable bundle $F\in\mathcal{M}_{\xi(x)}$ such that $\phi_F^{-1}(Z')$ has complement of codimension $\ge2$. Moreover $F$ is defined by a pair $(E,\ell)$ and we can suppose (by generality of $F$) that $\ell\in P(E_x)$ 
is not in the fibre of $P'$ over $E$.

By Proposition \ref{propnew1}, $(\text{id}_X\times\phi_F)^*(\mathcal{PU})
\cong P(\mathcal{E})$, where $\mathcal{E}$ is defined in \eqref{e7}. 
Moreover $\phi_F^*(P')$ lifts to a vector subbundle $V'$ of 
$\mathcal{E}_x':=\mathcal{E}_x|_{P(F_x^*)\cap\phi_F^{-1}(Z')}$. It follows that, if $N$ is defined as in 
Definition \ref{def1}, then $\phi_F^*N\cong V'^*\otimes(\mathcal{E}_x'/V')$. Now take 
$W=V'$ in Lemma \ref{lem1} (strictly speaking, we take $W$ to be an extension of $V'$ to a coherent subsheaf of $\mathcal{E}_x$). The condition on $\ell$ means that $V'$ does not contain the 
line subbundle $\mathcal{O}_{\mathbb{P}}(1)|_{P(F_x^*)\cap\phi_F^{-1}(Z')}$ of $\mathcal{E}_x'$. Hence
$$\frac{\deg V'}{\text{rk}V'}<\frac{\deg\mathcal{E}_x}{\text{rk}\mathcal{E}_x}$$
or equivalently
$$\deg\phi_F^*N>0.$$
It follows from Lemma \ref{lemnew4} that $\phi_F^*(\Theta)\cong\mathcal{O}(\delta)$ for some $\delta>0$; hence $\deg N>0$, which completes the proof.
\end{proof}

In order to prove the stability of $\mathcal{PU}$, we need a further lemma.

\begin{lemma}\label{lemmaproj} Let $P$ be a projective bundle on $X\times{\mathcal
M}_\xi$ such that the following two conditions hold:
\begin{itemize}
\item for general $x\, \in\, X$, the projective bundle $P_x$ on $\mathcal{M}_\xi$ is semistable, and
\item for general $E\, \in\, {\mathcal M}_\xi$, the
projective bundle $P(E)$ on $X$ is stable.
\end{itemize}
Then $P$ is stable with respect to any
polarization on $X\times{\mathcal M}_\xi$.
\end{lemma}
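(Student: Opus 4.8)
The plan is to exploit the product structure $Y=X\times\overline{\mathcal M}_\xi$, with $Z=X\times\mathcal M_\xi$ the open subset of codimension $\ge2$ complement on which $P$ lives, and to decompose the $D$-degree of a normal bundle into an $X$-direction part and an $\overline{\mathcal M}_\xi$-direction part. Since $\text{Pic}(\overline{\mathcal M}_\xi)\cong\mathbb Z$ is generated by $\Theta$ and $X$ is a curve, every numerical divisor class on $Y$ is a combination of $f:=p_X^*(\text{pt})$ and $\eta:=p_M^*\Theta$, so any polarization satisfies $D\equiv af+b\eta$ with $a,b>0$. Because $X$ is a curve we have $f^2=0$; writing $\mu=\dim\overline{\mathcal M}_\xi=(n^2-1)(g-1)$ and $m=\dim Y=\mu+1$, the binomial expansion collapses to
$$D^{m-1}=b^\mu\eta^\mu+\mu\,ab^{\mu-1}f\,\eta^{\mu-1}.$$

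First I would take a projective subbundle $P'$ of $P|_{Z'}$ as in Definition \ref{def1}, form $N=q'_*(N_1)$ on $Z'$, and pair $c_1(N)$ with the two terms above. Using $\eta^\mu=\theta_0\,[X\times\{E\}]$ with $\theta_0=\Theta^\mu>0$ and $f\,\eta^{\mu-1}=[\{x\}\times\overline{\mathcal M}_\xi]\cdot\Theta^{\mu-1}$, this gives
$$\deg N=b^\mu\,\theta_0\,\deg\big(N|_{X\times\{E\}}\big)+\mu\,ab^{\mu-1}\,\deg_\Theta\big(N_x\big),$$
where the two intersection numbers are computed on the fibres $X\times\{E\}$ (for general $E\in\mathcal M_\xi$) and $\{x\}\times\overline{\mathcal M}_\xi$ (for general $x\in X$); being fixed intersection numbers on $Y$, each is independent of the general choice made.

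Second I would treat each term by restriction to a fibre. Since $Y\setminus Z'$ has dimension $\le m-2=\mu-1$, for general $E$ the fibre $X\times\{E\}$ avoids $Y\setminus Z'$ entirely, so $P'$ restricts to a genuine projective subbundle of $P(E)$ on all of $X$; the vanishing of $R^iq'_*(N_1)$ for $i\ge1$ (recorded before Definition \ref{def1}) makes $q'_*(N_1)$ commute with this base change, so $N|_{X\times\{E\}}$ is precisely the normal bundle attached to $P'|_{X\times\{E\}}\subset P(E)$. By the second hypothesis $P(E)$ is stable, whence $\deg\big(N|_{X\times\{E\}}\big)>0$ and the first term is strictly positive. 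Likewise, for general $x$ the slice $\{x\}\times\overline{\mathcal M}_\xi$ meets $Z'$ in an open set of codimension $\ge2$ complement, and $N_x$ is the normal bundle attached to the restricted subbundle of $P_x$; by the first hypothesis $P_x$ is semistable, so $\deg_\Theta\big(N_x\big)\ge0$ and the second term is non-negative. Adding, $\deg N>0$, which is exactly the stability of $P$ by Definition \ref{def1}.

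The main obstacle is the base-change step: one must verify that restricting the direct-image bundle $N=q'_*(N_1)$ to a fibre reproduces the normal bundle of the restricted subbundle, and that for general $E$ (resp. general $x$) the restricted subbundle is defined on an open set large enough for Definition \ref{def1} to apply on the fibre. Both points follow from the stated vanishing of the higher direct images together with the dimension bound $\dim(Y\setminus Z')\le m-2$, but they are the places that require care rather than the purely formal degree computation.
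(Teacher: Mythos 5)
Your proposal is correct and follows essentially the same route as the paper's proof: write the polarization as $a\alpha+b\Theta$ (the paper justifies this decomposition via unirationality of $\mathcal{M}_\xi$ and the see--saw theorem rather than directly from $\mathrm{Pic}(\overline{\mathcal{M}}_\xi)\cong\mathbb{Z}$, but the conclusion is the same), expand the top self-intersection of the polarization using the vanishing of the square of the curve class, and bound the two surviving terms by semistability of $P_x$ for general $x$ and stability of $P(E)$ for general $E$. Your additional care about base change for $N=q'_*(N_1)$ and about which fibres lie in $Z'$ is left implicit in the paper but is accurate.
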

\begin{proof}
We follow the proof of \cite[Lemma 2.2]{BBN}.

Since
${\mathcal M}_\xi$ is unirational, there is no
nonconstant map from ${\mathcal M}_\xi$ to
$\text{Pic}^\delta(X)$. It follows from the see--saw theorem (see
\cite[p. 54, Corollary 6]{Mu}) that any line bundle over
$X\times{\mathcal M}_\xi$ is of the form $L_1\boxtimes L_2$. 
Hence any polarization of $X\times\mathcal{M}_\xi$ can be represented by a divisor of the 
form $a\alpha+b\Theta$, where $\alpha$ is a fixed ample divisor on $X$ and $a$, $b$ are positive integers.

Now suppose that $P'$ is a projective subbundle of the restriction of 
$P$ to a Zariski-open subset $Z'$ of $X\times\mathcal{M}_\xi$ with 
complement of codimension $\ge2$. Then for general 
$E\in\mathcal{M}_\xi$, we have $Z'\supset X\times\{E\}$, and for 
general $x\in X$, the complement 
of $Z'\cap\{x\}\times\mathcal{M}_\xi$ in $\mathcal{M}_\xi$ has codimension 
$\ge2$. With the notation of Definition \ref{def1}, we have, from the 
hypotheses of the lemma,
\begin{equation}\label{enew2}
\deg N_x\ge0,\ \ \deg N|_{X\times\{E\}}>0.
\end{equation}
Now
$$\deg N=[c_1(N).(a\alpha+b\Theta)^m](X\times\mathcal{M}_\xi),$$
where $m:=\dim\mathcal{M}_\xi$. So
\begin{eqnarray*}
\deg N&=&[c_1(N).(mab^{m-1}\alpha\Theta^{m-1}+b^m\Theta^m)](X\times\mathcal{M}_\xi)\\
&=&mab^{m-1}\alpha(X).\deg N_x+b^m\deg(N|_{X\times\{E\}})\Theta^m(\mathcal{M}_\xi)\\
&>&0
\end{eqnarray*}
by \eqref{enew2}.
\end{proof}

\begin{remark}\label{remnew13}\begin{em}
(i) Lemma \ref{lemmaproj} is true for vector bundles by \cite[Lemma 2.2]{BBN}.

(ii) The lemma can be generalized to $H$-bundles; for the proof, we simply replace $P'$ by $\sigma:Z'\to P/Q$, where $P$ is an $H$-bundle and $Q$ is a maximal parabolic subgroup, and $N$ by 
$\sigma^*(T_{P/Q})$.
\end{em}\end{remark}

Combining Lemma \ref{lemmaproj} and Theorem \ref{thm1} we now have our second theorem.

\begin{theorem}\label{thm2}
Under the hypotheses of Theorem \ref{thm1}, $\mathcal{PU}$ is stable with respect to any polarization on $X\times {\mathcal M}_\xi$. \qed
\end{theorem}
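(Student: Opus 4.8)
The plan is to deduce Theorem \ref{thm2} directly by invoking Lemma \ref{lemmaproj}, since that lemma was constructed precisely to reduce the stability of a projective bundle on a product $X\times\mathcal{M}_\xi$ to two separate ``one-variable'' conditions. Concretely, I would apply the lemma with $P=\mathcal{PU}$, so that the conclusion ``$\mathcal{PU}$ is stable with respect to any polarization'' follows as soon as both hypotheses of Lemma \ref{lemmaproj} are verified for $\mathcal{PU}$.

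First I would check the hypothesis on the restrictions $P_x=\mathcal{PU}_x$. This is immediate: Theorem \ref{thm1} asserts that $\mathcal{PU}_x$ is \emph{stable} for \emph{all} $x\in X$, which is stronger than the semistability for general $x$ demanded by Lemma \ref{lemmaproj}. So the first bullet point of the lemma's hypotheses holds without any further work.

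Second I would verify the hypothesis on the restrictions $P(E)=\mathcal{PU}|_{X\times\{E\}}$. By the defining property of the projective Poincar\'e bundle (established in section \ref{sec2}), $\mathcal{PU}|_{X\times\{E\}}\cong P(E)$ for every $E\in\mathcal{M}_\xi$, where $P(E)$ is the projective bundle over $X$ associated with the stable vector bundle $E$. Since $E$ is stable, the vector bundle $E$ is stable over $X$, and by Remark \ref{remnew4} stability of $P(E)$ as a projective bundle is equivalent to stability of $E$ as a vector bundle; hence $P(E)$ is stable for \emph{every} $E\in\mathcal{M}_\xi$, in particular for general $E$. This establishes the second bullet point of the lemma.

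With both hypotheses in hand, Lemma \ref{lemmaproj} yields that $\mathcal{PU}$ is stable with respect to any polarization on $X\times\mathcal{M}_\xi$, completing the proof. I do not anticipate a genuine obstacle here, as the real content has already been absorbed into Theorem \ref{thm1} and Lemma \ref{lemmaproj}; the only point that requires a moment's care is confirming that the restriction of $\mathcal{PU}$ to each vertical slice $X\times\{E\}$ is exactly $P(E)$ and that $P(E)$ is stable as a projective bundle over the curve $X$ (via the equivalence in Remark \ref{remnew4}), so that the second hypothesis of Lemma \ref{lemmaproj} is met at every point and not merely generically.
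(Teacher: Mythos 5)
Your proposal is correct and is essentially identical to the paper's own (very brief) proof, which simply combines Lemma \ref{lemmaproj} with Theorem \ref{thm1}; the two hypotheses of the lemma are verified exactly as you do, with the first following from Theorem \ref{thm1} and the second from the identification $\mathcal{PU}|_{X\times\{E\}}\cong P(E)$ together with the equivalence of stability of $P(E)$ and of $E$ from Remark \ref{remnew4}. Your write-up just makes explicit the checks the paper leaves implicit.
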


\begin{theorem}\label{thm4}
Under the hypotheses of Theorem \ref{thm1}, let $\rho:{\rm PGL}(n)\to H$ be a homomorphism with $H$ reductive and let  $\mathcal{U}^\rho$ be the induced Poincar\'e $H$-bundle. Then
\begin{itemize}
\item[(i)] $\mathcal{U}^\rho_x$ is semistable for all $x\in X$;
\item[(ii)] $\mathcal{U}^\rho$ is semistable with respect to any polarization on $X\times\mathcal{M}_\xi$;
\item[(iii)] if $\rho$ is irreducible, then $\mathcal{U}^\rho$ is stable with respect to any polarization on $X\times\mathcal{M}_\xi$.
\end{itemize}
\end{theorem}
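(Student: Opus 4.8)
The plan is to reduce each part to results already established, using the general machinery for principal bundles. For part (i), I would transport the stability statements for projective bundles into the language of $\text{PGL}(n)$-bundles via Remark \ref{remnew4}, which identifies Definition \ref{def1} with the standard notion of \cite{RR}. Theorem \ref{thm1} gives that $\mathcal{PU}_x$ is stable, hence the corresponding $\text{PGL}(n)$-bundle is stable; the induced $H$-bundle $\mathcal{U}^\rho_x$ is obtained by extension of structure group through $\rho$. The key input is the theorem of Ramanan–Ramanathan \cite{RR} stating that a bundle induced from a semistable principal bundle by a homomorphism of reductive groups is again semistable (this holds over a base of any dimension once stability is phrased via reductions to maximal parabolics on the complement of a codimension $\ge2$ locus, as in Remark \ref{remnew4}). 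Applying this with the stable — hence semistable — bundle $\mathcal{PU}_x$ yields semistability of $\mathcal{U}^\rho_x$ for every $x$.

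For part (ii), I would run the same argument with Theorem \ref{thm2} in place of Theorem \ref{thm1}: $\mathcal{PU}$ is stable as a projective bundle on $X\times\mathcal{M}_\xi$ with respect to any polarization, hence semistable as a $\text{PGL}(n)$-bundle, and the induced-bundle theorem of \cite{RR} gives semistability of $\mathcal{U}^\rho$ for any polarization. Here I would note that the polarization has the product form $a\alpha+b\Theta$ established in the proof of Lemma \ref{lemmaproj}, so the degree computation for reductions respects the same two-slope decomposition; semistability with respect to each such divisor is what the induced-bundle theorem delivers.

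For part (iii), the strengthening from semistable to stable is where the real content lies, and I would invoke the fibrewise stability supplied by Lemma \ref{lemnew6}: for irreducible $\rho$ and general $E\in\mathcal{M}_\xi$, the $H$-bundle $P(E)^\rho$ is stable on $X$. Combined with the semistability of $\mathcal{U}^\rho_x$ from part (i), this furnishes exactly the two hypotheses of the generalized Lemma \ref{lemmaproj} described in Remark \ref{remnew13}(ii), namely semistability of $\mathcal{U}^\rho_x$ for general $x$ and stability of $\mathcal{U}^\rho|_{X\times\{E\}}=P(E)^\rho$ for general $E$. The conclusion of that lemma then gives stability of $\mathcal{U}^\rho$ with respect to any polarization.

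The main obstacle I anticipate is part (iii), specifically securing the fibrewise stability of $P(E)^\rho$ for general $E$, which is not formal: it requires the monodromy argument of Lemma \ref{lemnew6} (or the analytic substitute of Remark \ref{remnew14}). The subtlety is that an irreducible homomorphism $\rho$ need not carry stable $\text{PGL}(n)$-bundles to stable $H$-bundles in general — stability of $P(E)^\rho$ can fail for special $E$ — so one genuinely needs a bundle $E$ whose monodromy is all of $\text{PGL}(n)$, together with the openness of stability, to extend the conclusion to the general fibre. Once Lemma \ref{lemnew6} is in hand, fitting it into the generalized Lemma \ref{lemmaproj} is routine, but I would take care that the degree inequality $\deg N|_{X\times\{E\}}>0$ used there is the one coming from stability (strict inequality) rather than mere semistability, which is precisely what irreducibility of $\rho$ buys.
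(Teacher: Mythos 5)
Your proposal is correct and follows essentially the same route as the paper: parts (i) and (ii) via Theorems \ref{thm1} and \ref{thm2} together with the Ramanan--Ramanathan induced-bundle theorem (extended to codimension-$\ge2$ open subsets as in Remark \ref{remnew4}), and part (iii) by combining (i) with Lemma \ref{lemnew6} and the $H$-bundle version of Lemma \ref{lemmaproj} from Remark \ref{remnew13}(ii). The only superfluous element is your appeal in (ii) to the product form $a\alpha+b\Theta$ of the polarization, which is not needed there since the induced-bundle theorem applies to an arbitrary polarization directly.
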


\begin{proof}
(i) and (ii) follow from Theorems \ref{thm1} and \ref{thm2} and \cite[Theorem 3.18]{RR}. (As stated, 
\cite[Theorem 3.18]{RR} applies only to bundles defined on a smooth curve, but it is extended in \cite[\S4]{RR} to the case of a Zariski-open subset with complement of codimension $\ge2$ in a smooth projective variety $Y$. In fact, the proof still works if we assume only that $Y$ is locally factorial.)

In view of Lemma \ref{lemnew6} and Remark \ref{remnew13}(ii), (iii) follows from (i).
\end{proof}

In the important special case of the adjoint Poincar\'e bundle, we can give an algebraic proof of Theorem \ref{thm4} when $n=2$ using the techniques of the present paper. We state this as

\begin{theorem}\label{thm5}
Under the hypotheses of Theorem \ref{thm1}, suppose further that $n=2$. Let $\text{ad}\,\mathcal{U}$ be the adjoint Poincar\'e bundle on $X\times\mathcal{M}_\xi$. Then
\begin{itemize}
\item[(i)] $\text{ad}\,\mathcal{U}_x$ is semistable for all $x\in X$;
\item[(ii)] $\text{ad}\,\mathcal{U}$ is stable with respect to any polarization on $X\times\mathcal{M}_\xi$.
\end{itemize}
\end{theorem}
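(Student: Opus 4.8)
The plan is to follow the same Hecke-transformation strategy used for Theorem \ref{thm1}, but now applied to the rank-$3$ vector bundle $\text{ad}\,\mathcal{U}$ rather than to the projective bundle $\mathcal{PU}$. Since $n=2$, the fibre $\text{ad}\,E$ is the traceless endomorphism bundle of a rank-$2$ stable bundle, so $\text{ad}\,\mathcal{U}$ is a rank-$3$ vector bundle on $X\times\mathcal{M}_\xi$ with $\text{ad}\,\mathcal{U}|_{X\times\{E\}}\cong\text{ad}\,E$ (Remark \ref{remnew}). I would prove part (ii) by invoking Lemma \ref{lemmaproj}, or rather its vector-bundle analogue (Remark \ref{remnew13}(i)): it suffices to show that $\text{ad}\,\mathcal{U}_x$ is semistable for general $x\in X$ (part (i)) and that $\text{ad}\,E$ is stable for general $E\in\mathcal{M}_\xi$. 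The latter is exactly the content of Remark \ref{remnew11}, which records that $\text{ad}\,E$ is stable for all $E\in\mathcal{Z}$, and $\mathcal{Z}$ is a non-empty Zariski-open subset by Lemma \ref{prop0}. So the whole problem reduces to part (i), the semistability of $\text{ad}\,\mathcal{U}_x$ for all $x\in X$.

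For part (i), I would first recompute the restriction of the Hecke family to the fibre over $x$. Applying $\text{ad}(-)$ to the family $\mathcal{E}$ of \eqref{e7} and restricting to $P(F_x^*)$, I expect to obtain from the short exact sequence \eqref{e8} of Lemma \ref{lemnew1} a corresponding description of $(\text{ad}\,\mathcal{E})_x$ in terms of $\mathcal{O}_{\mathbb P}(1)$ and $\Omega^1_{\mathbb P}(1)$. Concretely, when $n=2$ the projective space ${\mathbb P}=P(F_x^*)$ is $\mathbb{P}^1$, so $\mathcal{E}_x$ is a rank-$2$ bundle on $\mathbb{P}^1$ sitting in $0\to\mathcal{O}(1)\to\mathcal{E}_x\to\Omega^1_{\mathbb P}(1)\to0$; here $\Omega^1_{\mathbb{P}^1}(1)\cong\mathcal{O}(-1)$, so $\mathcal{E}_x\cong\mathcal{O}(1)\oplus\mathcal{O}(-1)$ (the extension necessarily splits, having positive $\text{Ext}$-degree in the wrong direction). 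Then $\text{ad}\,\mathcal{E}_x\cong\mathcal{S}l(\mathcal{E}_x)$ decomposes, and I would compute its Harder–Narasimhan / Jordan–Hölder data explicitly: I expect $\text{ad}\,\mathcal{E}_x\cong\mathcal{O}(2)\oplus\mathcal{O}\oplus\mathcal{O}(-2)$. The key point to extract is an analogue of Lemma \ref{lem1}: any subsheaf $W\subset\text{ad}\,\mathcal{E}_x$ with torsion-free quotient and slope $\ge0=\deg(\text{ad}\,\mathcal{E}_x)/\text{rk}$ must contain the destabilizing line subbundle $\mathcal{O}(2)$.

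With that lemma in hand, the argument for semistability of $\text{ad}\,\mathcal{U}_x$ parallels the proof of Theorem \ref{thm1} almost verbatim, using the diagram \eqref{enew4} and Lemma \ref{lemnew5}. Given a coherent subsheaf $\mathcal{F}'$ of $\text{ad}\,\mathcal{U}_x$ over a Zariski-open $Z'$ with complement of codimension $\ge2$, I would choose, by the same dimension count over the fibres of $q$, a $(0,1)$-stable $F\in\mathcal{M}_{\xi(x)}$ for which $\phi_F^{-1}(Z')$ has complement of codimension $\ge2$, and pull back via the embedding $\phi_F$ (Lemma \ref{lemnew4}) and Proposition \ref{propnew1} to reduce to a subsheaf of $(\text{ad}\,\mathcal{E})_x$ on $P(F_x^*)$. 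Applying the $\text{ad}$-version of Lemma \ref{lem1} gives $\deg\phi_F^*(\text{ad}\,\mathcal{U}_x\text{-quotient})\ge0$, and positivity of $\phi_F^*\Theta$ then yields semistability. The main obstacle I anticipate is not the dimension-counting geometry, which transfers directly, but the algebraic bookkeeping in identifying $\text{ad}\,\mathcal{E}_x$ and proving its version of Lemma \ref{lem1}: here the relevant subsheaves can have rank $1$ or $2$, so I cannot rely on a single stable quotient bundle as in \eqref{e9} but must instead track the full splitting type of $\text{ad}\,\mathcal{E}_x$ over $\mathbb{P}^1$ and check that every slope-$\ge0$ subsheaf with torsion-free quotient is forced into the $\mathcal{O}(2)$-summand. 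Because I obtain only $\deg N\ge0$ in part (i), the upgrade to strict stability in (ii) must come entirely from Lemma \ref{lemmaproj} via the strict inequality $\deg N|_{X\times\{E\}}>0$ supplied by the stability of $\text{ad}\,E$ for general $E$.
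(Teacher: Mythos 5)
Your reduction of (ii) to (i) is sound, and your route for the second input to Lemma \ref{lemmaproj} --- citing Remark \ref{remnew11} and Lemma \ref{prop0} to get stability of $\text{ad}\,E$ for $E$ in the open set $\mathcal{Z}$ --- is a legitimate (and arguably more elementary) alternative to the paper's appeal to the monodromy-based Lemma \ref{lemnew6}. Your computation $\text{ad}\,\mathcal{E}_x\cong\mathcal{O}_{\mathbb P}(2)\oplus\mathcal{O}_{\mathbb P}\oplus\mathcal{O}_{\mathbb P}(-2)$ also agrees with the paper. However, the key lemma on which you rest part (i) is false as stated: the middle summand $\mathcal{O}_{\mathbb P}\subset\mathcal{O}_{\mathbb P}(2)\oplus\mathcal{O}_{\mathbb P}\oplus\mathcal{O}_{\mathbb P}(-2)$ is a subbundle of slope $0\ge 0$ with torsion-free quotient which does not contain $\mathcal{O}_{\mathbb P}(2)$. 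Only the strict statement is true: a saturated subsheaf \emph{of positive degree} must contain $\mathcal{O}_{\mathbb P}(2)$ (equivalently, a saturated subsheaf not containing $\mathcal{O}_{\mathbb P}(2)$ has degree $\le 0$, since every homomorphism from $\mathcal{O}_{\mathbb P}(2)$ to the possible torsion-free quotients vanishes); this weaker conclusion is what the paper uses, and it yields only semistability, consistent with (i).

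More seriously, even with the corrected lemma your argument has a hole: you never explain how to guarantee that the pulled-back subsheaf $V'=\phi_F^*(V)$ fails to contain $\mathcal{O}_{\mathbb P}(2)$, and this is the crux, because $\phi_F^*(\text{ad}\,\mathcal{U}_x)$ is \emph{unstable} on $\mathbb{P}^1$. If $V'$ does contain $\mathcal{O}_{\mathbb P}(2)$ (for instance $V'=\mathcal{O}_{\mathbb P}(2)$, of degree $2>0$), then $\deg\bigl(V'^*\otimes(\text{ad}\,\mathcal{E}_x/V')\bigr)<0$ and the pullback argument gives nothing. In Theorem \ref{thm1} the analogous condition was arranged directly by choosing $\ell$ outside the fibre of $P'$ over $E$; here the subsheaf lives in $\text{ad}\,E_x$ rather than in $E_x$, so one must relate the choice of $\ell$ to the position of the sub-line-bundle $\mathcal{O}_{\mathbb P}(2)$. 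The paper supplies exactly this missing ingredient: the fibre of $\mathcal{O}_{\mathbb P}(2)$ at the point of $P(F_x^*)$ corresponding to $(E,\ell)$ is the line of endomorphisms $E_x\to E_x/\ell\to\ell\hookrightarrow E_x$, and as $\ell$ varies these lines span the $3$-dimensional space $\text{ad}\,E_x$; hence a proper subspace $V_E$ cannot contain all of them, so a general choice of $\ell$ forces $V'$ to avoid $\mathcal{O}_{\mathbb P}(2)$. This spanning argument is the genuinely new geometric step in the paper's proof of Theorem \ref{thm5}, and it is precisely what your proposal omits when you assert that ``the dimension-counting geometry transfers directly'' and locate the difficulty solely in the algebraic bookkeeping (which is, in fact, the routine part).
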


\begin{proof} For (i), we use the same technique as in the proof of Theorem \ref{thm1}. Since $n=2$, we have $\mb{P}=\mb{P}^1$ and \eqref{e8} becomes
$$0\longrightarrow\mathcal{O}_{\mb{P}}(1)\longrightarrow\mathcal{E}_x\longrightarrow\mathcal{O}_{\mb{P}}(-1)\longrightarrow0,$$
which splits to give
$$\mathcal{E}_x\cong\mathcal{O}_{\mb{P}}(1)\oplus\mathcal{O}_{\mb{P}}(-1).$$
Hence
$$\text{End}\,\mathcal{E}_x\cong\mathcal{O}_{\mb{P}}(2)\oplus
\mathcal{O}_{\mb{P}}^{\oplus2}\oplus\mathcal{O}_{\mb{P}}(-2).$$
Note moreover that $\mathcal{O}_{\mb{P}}(2)$ is contained in the subbundle 
$\text{ad}\,\mathcal{E}_x$ of endomorphisms of trace $0$, so 
\begin{equation}\label{enew5}
\text{ad}\,\mathcal{E}_x\cong\mathcal{O}_{\mb{P}}(2)\oplus
\mathcal{O}_{\mb{P}}\oplus\mathcal{O}_{\mb{P}}(-2).
\end{equation}
The bundle $F$ in \eqref{e7} is determined by a pair $(E,\ell)$ with $\ell$ a line in the fibre $E_x$. The subspace $\mathcal{O}_{\mb{P}}(2)_E$ of $\text{ad}\,E_x$ consists of the 
endomorphisms of $E_x$ of the form
$$E_x\longrightarrow E_x/\ell\longrightarrow \ell\hookrightarrow E_x.$$
As $\ell$ varies, these endomorphisms span the $3$-dimensional space $\text{ad}\,E_x$.

Now suppose that $V$ is a proper subbundle of $\text{ad}\,\mathcal{U}_x$ 
defined over some open set in $\mathcal{M}_\xi$ whose complement has 
codimension at least 2. As in the first part of the proof of 
Theorem \ref{thm1}, for a general choice of $(E,\ell)$, the bundle 
$V':=\phi_F^*(V)$ is defined on the whole of $P(F_x^*)$ (note that 
$\dim P(F_x^*)=1$). In view of the argument of the previous paragraph, 
we can suppose further that $V'$ does not contain the subbundle 
$\mathcal{O}_{\mb{P}}(2)$ of $\text{ad}\,\mathcal{E}_x$. It follows from \eqref{enew5} that $V'$ is isomorphic to a subsheaf of $\mathcal{O}_{\mb{P}}\oplus\mathcal{O}_{\mb{P}}(-2)$ and so
$$\deg V'\le0.$$
Since also $\deg(\text{ad}\,\mathcal{E}_x)=0$, we have
$$\deg (V'^*\otimes(\text{ad}\,\mathcal{E}_x/V'))\ge0$$
and the proof of (i) is completed in the same way as for Theorem \ref{thm1}.

For (ii), note that, if $E\in\mathcal{M}_\xi$ is general, then $\text{ad}\,E$ is stable by Lemma \ref{lemnew6}.  In view of Remark \ref{remnew13}(i), this completes the proof. 
\end{proof}

\section{Stability of the projective Picard bundle}\label{sec-stable}

In order to define the projective Picard bundle, we return to the construction of $\mathcal{M}_\xi$ as a quotient $\pi:R\to\mathcal{M}_\xi$. Let $p_R:X\times R\to R$ denote the natural projection. Then $p_{R*}\mathcal{E}_R$ is a torsion-free sheaf on $R$ and is non-zero if and only if $d>n(g-1)$. The subset $R'$ of $R$ defined by
$$R':=\{r\in R|H^1(X,\mathcal{E}_R|_{X\times\{r\}})=0\}$$
is Zariski-open and invariant under the action of $\text{PGL}(M)$. We write 
$$\mathcal{M}':=\{E\in\mathcal{M}_\xi| H^1(X,E)=0\}=\pi(R').$$
\begin{lemma}\label{lemnew7}\ 

\begin{itemize}
\item[(i)] If $d>n(g-1)$, then $\mathcal{M}'$ is non-empty and has complement of codimension $\ge2$ in $\mathcal{M}_\xi$.
\item[(ii)] If $d\ge2n(g-1)$, then $\mathcal{M}'=\mathcal{M}_\xi$.
\end{itemize}
\end{lemma}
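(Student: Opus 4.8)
The plan is to prove the two parts by a dimension count on the locus where $H^1$ fails to vanish. The key observation is that by Serre duality, $H^1(X,E)\cong H^0(X,E^*\otimes K_X)^*$, where $K_X$ is the canonical bundle of degree $2g-2$. So $H^1(X,E)\neq0$ precisely when $E^*\otimes K_X$ admits a nonzero section, i.e. when there is a nonzero homomorphism $E\to K_X$, equivalently a line subbundle of $E^*\otimes K_X$ of non-negative degree, equivalently a quotient line bundle $L$ of $E$ with $\deg L\le 2g-2$.

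For part (ii), I would give a direct argument. If $d\ge 2n(g-1)$, then the slope of $E$ satisfies $\mu(E)=d/n\ge 2(g-1)=\deg K_X$. Suppose a nonzero homomorphism $E\to K_X$ exists; its image is a subsheaf of $K_X$, hence has degree $\le\deg K_X=2g-2$. But the image is a nonzero quotient of the stable bundle $E$, so by stability any such quotient $Q$ satisfies $\mu(Q)>\mu(E)=d/n\ge 2(g-1)$. A torsion-free rank-one quotient (the image in $K_X$) would then have degree $>2(g-1)$, contradicting that it embeds in $K_X$. Hence no such homomorphism exists, $H^1(X,E)=0$ for every stable $E$, and $\mathcal{M}'=\mathcal{M}_\xi$.

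For part (i), when $n(g-1)<d\le 2n(g-1)$, the vanishing can genuinely fail, so I would estimate the dimension of the bad locus $\mathcal{M}_\xi\setminus\mathcal{M}'$. This locus consists of stable $E$ admitting a nonzero map $E\to K_X$. I would stratify by the isomorphism class (or at least the degree) of the image line bundle $L\hookrightarrow K_X$, which has some degree $e\le 2g-2$. Such an $E$ fits in an extension $0\to E'\to E\to L\to 0$ with $E'$ a subsheaf of rank $n-1$; counting the parameters for the quotient $L$ (a subsheaf of $K_X$, hence finitely many choices of degree and a bounded family of sub–line-bundles of $K_X$), for the rank-$(n-1)$ kernel of determinant $\xi\otimes L^{-1}$, and for the extension class in $H^1(X,\mathrm{Hom}(L,E'))$, I would show that the total dimension of this family is strictly less than $\dim\mathcal{M}_\xi=(n^2-1)(g-1)$ by at least $2$. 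The non-emptiness of $\mathcal{M}'$ follows once the bad locus is a proper closed subset, and the codimension bound is exactly what makes $\mathcal{M}'$ usable in Definition \ref{def1}.

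The main obstacle is the codimension-$\ge2$ estimate in part (i): the naive count must be pushed to show the bad locus has codimension at least $2$, not merely $1$, uniformly across all strata indexed by $e$ and by the determinant of the kernel. I expect the tightest stratum to be the one where $L$ has the largest allowed degree $e=2g-2$ (so $\det$ of the kernel is forced and the extension space is largest); here I would use the hypothesis $g\ge3$ together with $d>n(g-1)$ to secure the extra codimension. An alternative, cleaner route is to invoke a standard result on the Brill–Noether locus $\{E: h^0(E^*\otimes K_X)\ge1\}$ in the moduli space, whose expected codimension is the Brill–Noether number; I would verify that this number is $\ge2$ under the stated hypotheses and cite the relevant dimension estimate for generalized Brill–Noether loci, which handles all strata at once and avoids the case-by-case bookkeeping.
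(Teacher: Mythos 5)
Your part (ii) is correct and complete; it is exactly the argument the paper leaves implicit when it declares (ii) ``clear''. For part (i), your stratification is, up to Serre duality, the very count the paper performs: the paper replaces $E$ by $E^*\otimes K_X$ (stable of degree $d_1=n(2g-2)-d$ and determinant $K^{\otimes n}\otimes\xi^{-1}$, with $h^0\ge1$) and writes $0\to L'\to E^*\otimes K_X\to F\to 0$ with $L'$ effective; your sequence $0\to E'\to E\to L\to 0$ is the Serre dual of this, with $L'=L^{-1}\otimes K_X$ and $F=(E')^*\otimes K_X$. So the route is the right one, but everything you defer with ``I would show'' is precisely where the content of the lemma lies, and two essential ingredients are missing. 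First, to run the extension count you must evaluate $h^1(X,\mathrm{Hom}(L,E'))$ by Riemann--Roch, and for that you need $h^0(X,\mathrm{Hom}(L,E'))=0$; without this vanishing the parameter count is inflated by $h^0$ and the codimension estimate fails on exactly the strata you are worried about. The vanishing is true, but it needs a stability argument: $L$ is a torsion-free quotient of the stable bundle $E$, so $\deg L>d/n$, whereas a nonzero map $L\to E'\subset E$ would exhibit $L$ as a subsheaf of $E$, forcing $\deg L<d/n$ --- a contradiction. (The paper proves the dual statement $h^0(F^*\otimes L')=0$ this way.) Second, the kernels $E'$ need not be semistable, so you cannot count them as points of a moduli space of dimension $\left((n-1)^2-1\right)(g-1)$; one needs a boundedness-plus-dimension estimate valid for arbitrary bundles occurring in such sequences, which the paper obtains by adapting \cite[Lemma 4.1]{BGN} to fixed determinant.

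Your proposed ``cleaner'' alternative --- citing standard results on the codimension of the Brill--Noether locus $B(n,d_1,1)$ --- does not work, and the paper says so explicitly: the standard statements concern the full moduli space ${\mathcal M}(n,d_1)$ with varying determinant, and the authors state they could not locate in the literature a statement for the intersection with the fixed-determinant locus $\mathcal{M}_{K^{\otimes n}\otimes\xi^{-1}}$. This is not a formality: that intersection is a special fibre of the determinant map restricted to $B(n,d_1,1)$, and special fibres can have dimension strictly larger than the generic one, so the non-fixed-determinant codimension does not formally imply the fixed-determinant one; the explicit count exists precisely to rule this out. Finally, a small miscalibration which suggests the count was never actually run: the extra codimension does not come from $g\ge3$ but from integrality. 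Your guess about the tightest stratum is right (it is $\deg L=2g-2$, i.e. $L=K_X$, $L'=\mathcal{O}_X$), and there the count gives codimension exactly $1+d-n(g-1)$, which is $\ge2$ simply because $d>n(g-1)$ forces $d-n(g-1)\ge1$; every other stratum gains an additional $(n-1)\deg(K_X\otimes L^{-1})$.
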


\begin{proof} (i) Using Serre duality, the complement of $\mathcal{M}'$ in 
$\mathcal{M}_\xi$ can be identified with the intersection of the Brill-Noether 
locus $B(n,d_1,1)$ in ${\mathcal M}(n,d_1)$ with
$\mathcal{M}_{K^{\otimes n}\otimes\xi^{-1}}$, where $d_1=n(2g-2)-d$ and 
$$B(n,d_1,1):=\{E\in{\mathcal M}(n,d_1)| h^0(E)\ge1\}.$$ We claim that this 
Brill-Noether locus has the 
expected codimension in $\mathcal{M}_{K^{\otimes n}\otimes\xi^{-1}}$, namely
$$1-(2n(g-1)-d)+n(g-1)=1+d-n(g-1)\ge2.$$
The fact that $B(n,d_1,1)$ has the expected codimension in 
${\mathcal M}(n,d_1)$ is standard, but we have not been able to locate a proof 
for the intersection with $\mathcal{M}_{K^{\otimes n}\otimes\xi^{-1}}$ in the literature, 
so we give a proof here. Any stable bundle $E$ of degree $d_1$ with $h^0(E)\ge1$ 
can be expressed in the form 
\begin{equation}\label{e601}
0\longrightarrow L'\longrightarrow E\longrightarrow F\longrightarrow 0,
\end{equation}
where $L'$ is an effective line bundle of degree $d'\ge0$ and $F$ is a 
vector bundle of rank $n-1$ and degree $d_1-d'>0$. Since $E$ is stable, so is 
$E^*\otimes L'$, and by (\ref{e601}) we have $\deg(E^*\otimes L')<0$; it follows 
(again by stability) that $h^0(E^*\otimes L')=0$ and hence $h^0(F^*\otimes L')=0$.
Now, for any fixed $L'$, the bundles $F$ in (\ref{e601}) form a bounded family 
dependent on at most $\left((n-1)^2-1\right)(g-1)$ parameters (the corresponding 
fact for bundles of non-fixed determinant is proved in \cite[Lemma 4.1]{BGN} 
and the same proof works for fixed determinant). Hence the bundles $E$ in 
(\ref{e601}) depend on at most
\begin{equation*}\begin{array}{lll}
d'&+&\left((n-1)^2-1\right)(g-1)+\dim h^1(F^*\otimes L')-1\\
&=&d'+\left((n-1)^2-1\right)(g-1)+d_1-d'-(n-1)d'+(n-1)(g-1)-1\\
&=&(n^2-1)(g-1)-(1-d_1+n(g-1))-(n-1)d'
\end{array}\end{equation*}
parameters. So $B(n,d_1,1)\cap\mathcal{M}_{K^{\otimes n}\otimes\xi^{-1}}$ has codimension at least
$$1-d_1+n(g-1)=1+d-n(g-1)$$
as required. From standard Brill-Noether theory, this is also the maximum 
possible codimension, proving our claim.

(ii) is clear since any stable bundle $E$ of rank $n\ge2$ and degree $d\ge2n(g-1)$ has $H^1(X,E)=0$.
\end{proof}

Suppose now that $d>n(g-1)$. Since $\dim H^0(X,\mathcal{E}_R|_{X\times\{r\}})$ 
is constant on $R'$, the direct image $p_{R'*}(\mathcal{E}_R|_{X\times R'})$ 
is a vector bundle. The action of $\text{PGL}(M)$ on $R'$ lifts to an action of 
$\text{GL}(M)$ on ${\mathcal E}_R|_{X\times R'}$, hence also
on $p_{R'*}(\mathcal{E}_R|_{X\times R'})$. Since $\lambda I$ acts by 
multiplication by $\lambda$, this descends to an action of $\text{PGL}(M)$ on 
$P(p_{R'*}(\mathcal{E}_R|_{X\times R'}))$; we write
$$\mathcal{PW}:=P(p_{R'*}(\mathcal{E}_R|_{X\times R'}))/\text{PGL}(M).$$

We have the following analogue of Proposition \ref{propnew1}:

\begin{proposition}\label{propnew2}
Let $\mathcal E$ be a vector bundle on $X\times Z$ such that the restriction
of $\mathcal E$ to $X\times\{z\}$ is stable of rank $n$ and determinant $\xi$
for all $z\in Z$ and let $\phi_{\mathcal E}:Z\to\mathcal{M}_\xi$ be the
corresponding morphism. Suppose in addition that
$H^1(X,\mathcal{E}|_{X\times\{z\}})=0$ for all $z\in Z$, so that
$\phi_{\mathcal{E}}$ factors through a morphism $\phi_{\mathcal{E}}':Z\to\mathcal{M}'$. Then the projective bundles $P(p_{Z*}{\mathcal E})$ and $(\phi_{\mathcal E}')^*(\mathcal{PW})$ are isomorphic.
\end{proposition}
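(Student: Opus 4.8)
The plan is to mirror the proof of Proposition \ref{propnew1}, replacing the universal bundle on $X\times R$ by its direct image along the projection to the base. First I would form the pullback diagram over $\mathcal{M}'$: since $\phi_{\mathcal E}$ lands in $\mathcal{M}'$ (by the hypothesis $H^1(X,\mathcal{E}|_{X\times\{z\}})=0$ for all $z$), I get a diagram
\begin{equation*}
\begin{matrix}
Y'&\stackrel{\phi_{Y'}}{\longrightarrow}& R'\\
\Big\downarrow\pi''&&\ \ \Big\downarrow\pi\\
Z&\stackrel{\phi_{\mathcal E}'}{\longrightarrow}&\ \ \mathcal{M}',
\end{matrix}
\end{equation*}
where $Y'=Z\times_{\mathcal{M}'}R'$. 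On $X\times Y'$ the two vector bundles $(\text{id}_X\times\phi_{Y'})^*(\mathcal{E}_R|_{X\times R'})$ and $(\text{id}_X\times\pi'')^*\mathcal E$ restrict to stable isomorphic bundles on each $X\times\{y\}$, so exactly as in Proposition \ref{propnew1} their difference is pulled back from a line bundle $\mathcal L$ on $Y'$, and there is a natural isomorphism
\begin{equation*}
(\text{id}_X\times\phi_{Y'})^*(\mathcal{E}_R|_{X\times R'})\otimes p_{Y'}^*\mathcal L
\longrightarrow (\text{id}_X\times\pi'')^*\mathcal E .
\end{equation*}

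Next I would apply the direct image along $p_{Y'}:X\times Y'\to Y'$. The key point is that direct image commutes with the relevant base changes: since $H^1$ vanishes on every fibre over $R'$ and over $Z$, the sheaves $p_{R'*}(\mathcal{E}_R|_{X\times R'})$ and $p_{Z*}\mathcal E$ are vector bundles whose formation is compatible with pullback, so $p_{Y'*}$ of each side of the displayed isomorphism equals the pullback to $Y'$ of the corresponding direct image on $R'$ (resp. $Z$). Pushing forward therefore yields
$$(\phi_{Y'})^*\,p_{R'*}(\mathcal{E}_R|_{X\times R'})\otimes\mathcal L
\;\cong\;(\pi'')^*\,p_{Z*}\mathcal E$$
on $Y'$, where the tensoring by $\mathcal L$ survives because $\mathcal L$ is pulled back from $Y'$ and hence comes out of the direct image by the projection formula. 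Projectivizing kills the twist by $\mathcal L$.

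Then I would track the equivariance, exactly as in the proof of Proposition \ref{propnew1}. The group $\text{GL}(M)$ acts naturally on both direct images; on $(\phi_{Y'})^*p_{R'*}(\mathcal{E}_R|_{X\times R'})$ the scalar $\lambda I$ acts by multiplication by $\lambda$ (inherited from its action on $\mathcal{E}_R$), while on $(\pi'')^*p_{Z*}\mathcal E$ it acts trivially. The induced action on $\mathcal L$ makes the isomorphism $\text{GL}(M)$-equivariant, so after projectivizing it becomes $\text{PGL}(M)$-equivariant. Taking the quotient by $\text{PGL}(M)$ identifies $P((\phi_{Y'})^*p_{R'*}(\mathcal{E}_R|_{X\times R'}))/\text{PGL}(M)$ with $(\phi_{\mathcal E}')^*\mathcal{PW}$ on the one hand and with $P(p_{Z*}\mathcal E)$ on the other, giving the desired isomorphism.

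The main obstacle I anticipate is the base-change/cohomology-and-base-change bookkeeping in the second step: I must ensure that forming $p_{Z*}$ genuinely commutes with the pullbacks appearing in the pullback square, which is why the hypothesis $H^1(X,\mathcal{E}|_{X\times\{z\}})=0$ (and the corresponding vanishing over $R'$) is essential — it guarantees constant $h^0$ on fibres, local freeness of the direct images, and that cohomology-and-base-change applies cleanly. Once that is secured, the equivariance argument is a routine transcription of Proposition \ref{propnew1}, the projection formula disposes of $\mathcal L$, and the conclusion follows by taking quotients.
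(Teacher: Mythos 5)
Your proposal is correct and follows essentially the same route as the paper's proof: restrict the pullback diagram \eqref{enew3} to $\mathcal{M}'$, use the ${\rm PGL}(M)$-equivariant isomorphism \eqref{e600} (which you re-derive rather than restrict) on $X\times Y'$, push forward along $p_{Y'}$, and take quotients. The only difference is presentational: you spell out the cohomology-and-base-change and projection-formula bookkeeping that the paper leaves implicit in the phrase ``taking direct images, we obtain a ${\rm PGL}(M)$-equivariant isomorphism of vector bundles on $Y'$.''
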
 

\begin{proof}
Consider the pullback diagram
\begin{equation}\label{enew8}
\begin{matrix}
Y'\ \ &\stackrel{\phi_{Y'}}{\longrightarrow}& R'\\
\Big\downarrow\pi' && \ \ \Big\downarrow\pi\\
Z\ \ &\stackrel{\phi_{\mathcal E}'}{\longrightarrow}&\ \ {\mathcal M}'.\end{matrix}
\end{equation}
obtained by restricting \eqref{enew3} to $\mathcal{M}'$. The restriction of 
(\ref{e600}) to $X\times Y'$ yields a $\text{PGL}(M)$-equivariant 
isomorphism
$$(\text{id}_X\times\phi_{Y'})^*(\mathcal{E}_R|_{X\times R'})\otimes p_{Y'}^*{\mathcal L}'
\cong(\text{id}_X\times\pi')^*\mathcal{E}$$
of  vector bundles on $X\times Y'$, where ${\mathcal L}'$ is a line bundle on $Y'$ 
admitting an action of $\text{GL}(M)$. Taking direct images, 
we obtain a $\text{PGL}(M)$-equivariant isomorphism 
$$p_{Y'*}((\text{id}_X\times\phi_{Y'})^*(\mathcal{E}_R|_{X\times R'}))\otimes {\mathcal L}'\cong 
p_{Y'*}((\text{id}_X\times\pi')^*\mathcal{E})$$
of vector bundles on $Y'$. Taking quotients of the corresponding projective bundles by the action of $\text{PGL}(M)$ gives the result.
\end{proof}

The proof of Corollary \ref{cornew1} now applies to show that $\mathcal{PW}$ is independent of the choice of $R$ and $\pi$.
We therefore call $\mathcal{PW}$ the \textit{projective Picard bundle}. It is
a projective bundle with fibre dimension $d-n(g-1)-1$. 

\begin{remark}\label{remnew5}\begin{em}
If $\gcd(n,d)=1$ and $d\ge2n(g-1)$, $\mathcal{PW}$ is  the
projectivization of the Picard bundle $\mathcal{W}_\xi$ on $\mathcal{M}_\xi$
associated to the universal vector bundle on $X\times\mathcal{M}_\xi$. When
$d>2n(g-1)$, the stability of $\mathcal{W}_\xi$ was proved in
\cite{BBGN}. Without the restriction on $d$, we can still define a {\em Picard
  sheaf} $\mathcal{W}_\xi$ on $\mathcal{M}_\xi$ by
$$\mathcal{W}_\xi:=p_*\mathcal{U}_\xi,$$
where $\mathcal{U}_\xi$ is a Poincar\'e bundle on $X\times\mathcal{M}_\xi$ and
$p:X\times\mathcal{M}_\xi\to\mathcal{M}_\xi$ is the natural projection. The
sheaf $\mathcal{W}_\xi$ is determined up to tensoring by a line bundle on
$\mathcal{M}_\xi$ and has rank $\max\{0,d-n(g-1)\}$. Moreover
$\mathcal{W}_\xi$ is torsion-free and its restriction to $\mathcal{M}'$ is a
vector bundle $\mathcal{W}'$ with the property that $P(\mathcal{W}')\cong\mathcal{PW}$.
\end{em}\end{remark}

\begin{theorem}\label{thm3}
Under the hypotheses of Theorem \ref{thm1}, suppose further that $d>n(g-1)$. Then the projective Picard bundle $\mathcal{PW}$ on ${\mathcal M}'$ is stable.
\end{theorem}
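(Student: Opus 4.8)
The plan is to follow the proof of Theorem~\ref{thm1}, with the fibrewise restriction $\mathcal{E}_x$ replaced by the direct image of the Hecke family along $X$. Let $P'$ be a projective subbundle of $\mathcal{PW}$ over a Zariski-open $Z'\subset\mathcal{M}'$ whose complement has codimension $\ge2$. By Lemma~\ref{lemnew7}(i) the complement of $\mathcal{M}'$ in $\mathcal{M}_\xi$ has codimension $\ge2$, so $Z'$ has complement of codimension $\ge2$ in $\mathcal{M}_\xi$. Fixing $x\in X$ and running the construction of Section~\ref{sec3} for a general $(0,1)$-stable $F\in\mathcal{M}_{\xi(x)}$, the dimension count from the proof of Theorem~\ref{thm1} (using \eqref{enew4}) lets us arrange that $\phi_F^{-1}(Z')$ has complement of codimension $\ge2$ in $\mathbb{P}:=P(F_x^*)$. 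Writing $U:=\phi_F^{-1}(\mathcal{M}')\supseteq\phi_F^{-1}(Z')$, the set $U$ then also has complement of codimension $\ge2$ in $\mathbb{P}$, and $H^1(X,E_z)=0$ for every $z\in U$; hence Proposition~\ref{propnew2} gives an isomorphism $\phi_F^*(\mathcal{PW})\cong P(p_{\mathbb{P}*}\mathcal{E})$ over $U$, where $\mathcal{E}$ is the family \eqref{e7} on $X\times\mathbb{P}$ and $p_{\mathbb{P}}:X\times\mathbb{P}\to\mathbb{P}$ is the projection.

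Next I would identify $p_{\mathbb{P}*}\mathcal{E}$ on $U\subset\mathbb{P}=\mathbb{P}^{n-1}$. Pushing the defining sequence $0\to\mathcal{E}\to p^*F\to\iota_*\mathcal{O}_{\mathbb{P}}(1)\to0$ of \eqref{e7} forward by $p_{\mathbb{P}}$ gives
$$0\longrightarrow p_{\mathbb{P}*}\mathcal{E}\longrightarrow H^0(X,F)\otimes\mathcal{O}_{\mathbb{P}}\stackrel{\psi}{\longrightarrow}\mathcal{O}_{\mathbb{P}}(1),$$
with $\psi$ induced by evaluation at $x$; over $U$ the map $\psi$ is surjective, since $\mathrm{coker}(\psi_z)$ injects into $H^1(X,E_z)=0$. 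Factoring $\psi$ through the Euler surjection $F_x\otimes\mathcal{O}_{\mathbb{P}}\to\mathcal{O}_{\mathbb{P}}(1)$, whose kernel is $\Omega^1_{\mathbb{P}}(1)$, exhibits $p_{\mathbb{P}*}\mathcal{E}$ over $U$ as an extension
$$0\longrightarrow T\longrightarrow p_{\mathbb{P}*}\mathcal{E}\longrightarrow\mathcal{Q}\longrightarrow0,$$
where $T:=H^0(X,F(-x))\otimes\mathcal{O}_{\mathbb{P}}$ is trivial and $\mathcal{Q}$ is a nonzero subsheaf of $\Omega^1_{\mathbb{P}}(1)$ (which is stable, as in Lemma~\ref{lem1}). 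Note that $\deg(p_{\mathbb{P}*}\mathcal{E})=-1$, so $p_{\mathbb{P}*}\mathcal{E}$ is itself unstable (the trivial sub $T$ has slope $0$); this is precisely why a genericity hypothesis will be needed.

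Let $V'\subset p_{\mathbb{P}*}\mathcal{E}$ be the vector subbundle lifting $\phi_F^*(P')$. As in Definition~\ref{def1} and Remark~\ref{remnew4}, $\phi_F^*N\cong V'^*\otimes(p_{\mathbb{P}*}\mathcal{E}/V')$, and $\deg\phi_F^*N>0$ is equivalent to $\mu(V')<\mu(p_{\mathbb{P}*}\mathcal{E})$. The key degree estimate is this: if the composite $V'\to\mathcal{Q}$ is nonzero, then from $0\to V'\cap T\to V'\to\overline{V'}\to0$ we get $\deg(V'\cap T)\le0$ (a subsheaf of a trivial bundle) and $\deg\overline{V'}\le-1$ (a nonzero subsheaf of $\Omega^1_{\mathbb{P}}(1)$, which has negative slope), whence $\deg V'\le-1$; as $V'$ is a proper subbundle this yields $\mu(V')\le-1/\mathrm{rk}\,V'<-1/\mathrm{rk}(p_{\mathbb{P}*}\mathcal{E})=\mu(p_{\mathbb{P}*}\mathcal{E})$. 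The step I expect to be the main obstacle is ensuring that this composite is indeed nonzero, i.e. that $V'\not\subset T$: since $T_z=H^0(X,E_z(-x))$ consists of the sections of $E_z$ vanishing at $x$, $V'\subset T$ would force every section in the fibres of $V'$ to vanish at $x$. By choosing $x$ generically relative to $P'$ — a fixed nonzero subspace of $H^0(X,E)$ for general $E$ is not contained in $H^0(X,E(-x))$, because a nonzero section has only finitely many zeros — and then $F$ general so that a general point of $Z'$ lies in the image of $\phi_F$, I would rule this out; the care lies in arranging this genericity compatibly with the codimension bookkeeping above.

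Granting $\deg\phi_F^*N>0$, the conclusion follows exactly as in Theorem~\ref{thm1}: Lemma~\ref{lemnew4} gives $\phi_F^*(\Theta)\cong\mathcal{O}(\delta)$ with $\delta>0$, and since $\mathrm{Pic}(\mathcal{M}_\xi)\cong\mathbb{Z}\cdot\Theta$ we may write $c_1(N)=k\Theta$; then $\deg\phi_F^*N=k\delta$, so $k>0$ and hence $\deg N=k\,\Theta^{\dim\mathcal{M}_\xi}>0$. By Definition~\ref{def1} this proves that $\mathcal{PW}$ is stable.
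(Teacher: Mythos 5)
Your proposal follows the paper's argument essentially step for step: the reduction to a general Hecke family via $\phi_F$ with the same codimension bookkeeping, the identification $\phi_F^*(\mathcal{PW})\cong P(p_{\mathbb{P}*}\mathcal{E})$ via Proposition \ref{propnew2}, the presentation of $p_{\mathbb{P}*}\mathcal{E}$ as an extension of a subsheaf $\mathcal{Q}$ of $\Omega^1_{\mathbb{P}}(1)$ by the trivial bundle $T=H^0(X,F(-x))\otimes\mathcal{O}_{\mathbb{P}}$ (this is exactly the paper's diagram \eqref{d8}), the estimate $\deg V'\le -1$ when $V'\to\mathcal{Q}$ is nonzero, and the passage from $\deg\phi_F^*N>0$ to $\deg N>0$ using $\mathrm{Pic}(\mathcal{M}_\xi)\cong\mathbb{Z}$. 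All of that is correct and matches the paper.

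The gap is at the step you yourself flag as the main obstacle, and your proposed resolution rests on a false identification. You claim that $T_z=H^0(X,E_z(-x))$, the space of sections of $E_z$ vanishing at $x$. In fact $T_z=H^0(X,F(-x))$, and by \eqref{e11} a section $s\in H^0(X,E_z)$ lies in $H^0(X,F(-x))$ if and only if $s(x)\in\ell_z$, where $\ell_z\subset (E_z)_x$ is the line of the Hecke modification (the kernel of the fibre map $(E_z)_x\to F_x$). So $T_z$ strictly contains $H^0(X,E_z(-x))$ in general, and $V'\subset T$ does \emph{not} force the sections in the fibres of $V'$ to vanish at $x$; it only forces their values at $x$ to lie in the moving lines $\ell_z$. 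Consequently, choosing $x$ so that some section in the fibre of $P'$ at a general $E$ is nonvanishing at $x$ does not rule out $V'\subset T$, and your justification that the composite $V'\to\mathcal{Q}$ is nonzero does not go through as written. The repair is precisely the genericity condition the paper imposes: the line $\ell$ is itself part of the Hecke datum, and $(E,\ell)$ may be chosen generally in $H_x$, so one can require in addition that some section $s$ in the cone over the fibre $P'_E$ satisfies $s(x)\not\in\ell$ (possible since $s(x)\ne 0$ and a general line in the $n$-dimensional fibre $E_x$ misses $[s(x)]$, as $n\ge2$); then $s\notin T_{z_0}$ at the point $z_0$ with $\phi_F(z_0)=E$, hence $V'\not\subset T$. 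With this correction your proof coincides with the paper's.
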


\begin{proof} 
Suppose that $P'$ is a projective subbundle of the restriction of 
$\mathcal{PW}$ to some Zariski-open subset $Z'$ of $\mathcal{M}'$ with 
complement of codimension $\ge2$. As in the proof of Theorem \ref{thm1}, for 
general $(E,\ell)$, the complement of $Z'':=\phi_F^{-1}(Z')$ has codimension 
$\ge2$ in $\mathbb{P}:=P(F_x^*)$. We can also suppose that the fibre of $P'$ 
at $E$ contains a point representing a section $s\in H^0(E)$ such that 
$s(x)\not\in\ell$.  Now consider the direct image of \eqref{e7} by the natural 
projection $p_2:X\times Z''\to Z''$; since 
$R^1p_{2*}(\mathcal{E}|_{X\times Z''})=0$ by definition of $\mathcal{M}'$, 
we obtain the following diagram of
exact sequences on $Z''$:

\begin{equation}\begin{array}{ccccccccc}\label{d8}
&&0&&0\\
&&\Big\downarrow&&\Big\downarrow\\
&&H^0(X,F(-x))\otimes\mathcal{O}_{Z''}&=&H^0(X,F(-x))\otimes\mathcal{O}_{Z''}\\
&&\Big\downarrow&&\Big\downarrow\\
0&\longrightarrow&p_{2*}(\mathcal{E}|_{X\times Z''}) &\longrightarrow&
H^0(X,F)\otimes\mathcal{O}_{Z''}& \longrightarrow&
\mathcal{O}_{Z''}(1)&\longrightarrow&0\\
&&\Big\downarrow&& \Big\downarrow&&\Arrowvert\\
0&\longrightarrow
&\Omega^1_{Z''}(1)&\longrightarrow&F_x\otimes\mathcal{O}_{Z''}&\longrightarrow&
\mathcal{O}_{Z''}(1)&\longrightarrow&0
\end{array}\end{equation}

By Proposition \ref{propnew2}, $P(p_2^*(\mathcal{E}|_{X\times Z''}))$ is isomorphic to $\phi_F^*(\mathcal{PW})|_{Z''}$ and we can therefore lift $\phi_F^*(P')|_{Z''}$ to a vector subbundle $V'$ of $p_{2*}(\mathcal{E}|_{X\times Z''})$. By construction, the induced homomorphism
\begin{equation}
V'\to\Omega^1_{Z''}(1)\label{110}\end{equation}
has non-zero image. Since  $\Omega^1_{Z''}(1)$ is stable of degree
$-1$, the image of (\ref{110}) has degree $\le-1$. Moreover the
kernel of (\ref{110}) is a subsheaf of a trivial sheaf and therefore
has degree $\le0$. So
$$\deg V'\le-1.$$
On the other hand, by $(\ref{d8})$, the bundle
 $p_{2*}(\mathcal{E}|_{X\times Z''})$ has degree $-1$; so
$$
\deg(V'^*\otimes (p_{2*}(\mathcal{E}|_{X\times Z''})/V'))>0.$$
This completes the proof in the same way as that of Theorem \ref{thm1}.\end{proof}

\begin{corollary}\label{newcor3}Suppose that $\gcd(n,d)=1$ and
  $d>n(g-1)$. Then
\begin{itemize}
\item[(i)] the Picard sheaf $\mathcal{W}_\xi$ on $\mathcal{M}_\xi$ is stable;
\item[(ii)] the Picard bundle $\mathcal{W}'$ on $\mathcal{M}'$ is stable.
\end{itemize}\end{corollary}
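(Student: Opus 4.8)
The plan is to deduce both parts from the stability of the projective Picard bundle $\mathcal{PW}$ established in Theorem \ref{thm3}, combined with the comparison between $\mathcal{PW}$, $\mathcal{W}'$ and $\mathcal{W}_\xi$ recorded in Remark \ref{remnew5}. I would prove part (ii) first and then bootstrap from it to part (i).

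For part (ii), recall from Remark \ref{remnew5} that, when $\gcd(n,d)=1$, the restriction $\mathcal{W}'$ of the Picard sheaf to $\mathcal{M}'$ is a vector bundle satisfying $P(\mathcal{W}')\cong\mathcal{PW}$. By Remark \ref{remnew4}, the projectivization of a vector bundle is stable (with respect to the polarization) if and only if the vector bundle itself is stable. Since $\mathcal{PW}=P(\mathcal{W}')$ is stable by Theorem \ref{thm3}, it follows immediately that $\mathcal{W}'$ is stable on $\mathcal{M}'$.

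For part (i), I would pass from $\mathcal{M}'$ to the whole of $\mathcal{M}_\xi$. Since $\gcd(n,d)=1$, every semistable bundle of rank $n$ and determinant $\xi$ is stable, so $\mathcal{M}_\xi=\overline{\mathcal{M}}_\xi$ is a smooth projective, hence locally factorial, variety, and by Lemma \ref{lemnew7}(i) the complement of $\mathcal{M}'$ has codimension $\ge2$. By Remark \ref{remnew5} the Picard sheaf $\mathcal{W}_\xi$ is torsion-free and restricts to $\mathcal{W}'$ on $\mathcal{M}'$, so in particular $\mu(\mathcal{W}_\xi)=\mu(\mathcal{W}')$. Let $\mathcal{F}\subset\mathcal{W}_\xi$ be a proper nonzero saturated subsheaf. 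Its restriction $\mathcal{F}|_{\mathcal{M}'}$ is a subsheaf of $\mathcal{W}'$ of the same rank, and because $c_1$ is computed as a divisor class on $\mathcal{M}_\xi$ in the sense of the degree introduced in Section \ref{sec2}, while divisor classes are insensitive to modifications in codimension $\ge2$, we have $\deg\mathcal{F}=\deg(\mathcal{F}|_{\mathcal{M}'})$ and hence $\mu(\mathcal{F})=\mu(\mathcal{F}|_{\mathcal{M}'})$. Stability of $\mathcal{W}'$ from part (ii) then gives $\mu(\mathcal{F})<\mu(\mathcal{W}')=\mu(\mathcal{W}_\xi)$, which is precisely the stability of $\mathcal{W}_\xi$.

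The routine ingredient is the slope bookkeeping; the one point that genuinely requires care is the assertion that the degree of a saturated subsheaf is unchanged on passing to the open set $\mathcal{M}'$. This is where the local factoriality of $\mathcal{M}_\xi$ and the codimension $\ge2$ statement of Lemma \ref{lemnew7}(i) are essential, since together they guarantee that $c_1(\mathcal{F})$ and $c_1(\mathcal{F}|_{\mathcal{M}'})$ define the same divisor class, so that no degree is lost across the complement. I expect this comparison of degrees to be the main, though mild, obstacle; everything else follows formally from Theorem \ref{thm3} and Remarks \ref{remnew4} and \ref{remnew5}.
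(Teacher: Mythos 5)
Your proposal is correct and takes essentially the same route as the paper: the paper likewise deduces both statements from Theorem \ref{thm3} via $P(\mathcal{W}')\cong\mathcal{PW}$ (with Remark \ref{remnew4} identifying stability of a projectivization with stability of the underlying vector bundle), together with the observation that, since $\mathcal{M}_\xi\setminus\mathcal{M}'$ has codimension $\ge2$, statements (i) and (ii) are equivalent. Your careful comparison of degrees of saturated subsheaves across the codimension-$\ge2$ complement is precisely the content that the paper leaves implicit in asserting that equivalence.
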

\begin{proof}
Since $\mathcal{M}_\xi\setminus\mathcal{M}'$ has codimension $\ge2$ in
$\mathcal{M}_\xi$, the two statements are equivalent. The result now follows
from the fact that $P(\mathcal{W}')=\mathcal{PW}$ and Theorem \ref{thm3}.
\end{proof}

\end{document}